\documentclass[a4paper,11pt]{amsart}

\usepackage{amssymb,amscd,amsmath}
\usepackage[mathcal]{eucal}
\usepackage{array,float}
%\usepackage{mathtools}

%--------------------
%\usepackage{xcolor}

\usepackage{xy}
\input xy
\xyoption{all}
\usepackage{pdflscape}
\usepackage{hyperref}
\usepackage{pdflscape}
\usepackage{enumerate}
\usepackage{tikz}
\usepackage{tikz-cd}
\usepackage{relsize}%
\usetikzlibrary{decorations.pathreplacing}
\usetikzlibrary{arrows.meta, positioning}
\usepackage[left=3cm, right=3cm]{geometry}
\usepackage{enumerate}
\numberwithin{equation}{section}
\numberwithin{equation}{subsection}

\newtheorem{theorem}{Theorem}[section]

\newtheorem{lemma}[theorem]{Lemma}

\newtheorem*{remark*}{Remark}

\title[On the CLASSIFICATION OF FINITE GROUPS]{Finite Groups with nearly half as many cyclic subgroups as elements} 

\author{Vaibhav Chhajer}

\address{School of Mathematical Sciences, National Institute of Science Education and Research, An OCC of Homi Bhabha National Institute, Bhubaneswar 752050, Odisha, India}

\email{vchhajer@niser.ac.in}

\author{Sumana Hatui}
\address{School of Mathematical Sciences, National Institute of Science Education and Research, An OCC of Homi Bhabha National Institute, Bhubaneswar 752050, Odisha, India}

\email{sumanahatui@niser.ac.in}

\author{Palash Sharma}
\address{Department of Mathematical Sciences, Indian Institute of Science Education and Research Mohali, Knowledge City, Sector 81, Mohali 140 306, Punjab, India}

\email{ms22001@iisermohali.ac.in}

\begin{document}
\subjclass[2020]{20D25; 20E34}
\date{}
\keywords{finite groups, cyclic subgroups} 
\begin{abstract}
Suppose $C(G)$ denotes the set of all cyclic subgroups of a finite group $G$, and $\mathcal{O}_{2}(G)$ denotes the number of elements of order $2$ in $G$.
 In \cite{MT}, an open problem was asked to classify the groups $G$ with  $|C(G)|=|G|-r$, where $2 \leq r \leq |G|-1$. 
In this article, first  we
show that, for an odd prime $p$, there are infinitely many groups $G$ with $|C(G)|= \frac{|G|}{2}$,  $|C(G)|=\frac{|G|}{p^{q-1}}$ (for prime $q\neq p)$,  or $|C(G)|=\frac{|G|}{2}+2^{k},  k\geq 0$.
Then, we  partially answer the open question by classifying finite groups $G$ having 
$\frac{|G|}{2}-1\leq |C(G)| \leq \frac{|G|}{2}+1$ for some fix values of  $\mathcal{O}_{2}(G)$.
Finally, we provide a complete list of finite groups $G$ having $|C(G)|=\frac{|G|+(2r+1)}{2}$ for $r\geq-1$.
\end{abstract}
\maketitle

%by obtaining the best possible bound for $|C(G)|$ depending on $\mathcal{O}_{2}(G)$. 

% for the following cases. \\
% (i) $|C(G)|=\frac{|G|}{2}$ with $\mathcal{O}_{2}(G)=1,3, \frac{|G|}{2}-2, \frac{|G|}{2}-3$.\\
 % (ii) $|C(G)|=\frac{|G|}{2}-1$ with $\mathcal{O}_{2}(G)=1,\frac{|G|}{2}-3,\frac{|G|}{2}-4$.\\
 % (iii) $|C(G)|=\frac{|G|}{2}+1$  with $\mathcal{O}_{2}(G)=3,\frac{|G|}{2},\frac{|G|}{2}-1,\frac{|G|}{2}-2$.\\

 %We also prove that, if $|C(G)|=\frac{|G|}{2}+r$, then $G$ is either a $\{2,3\}$-group or a $\{2,5\}$-group.
 
%  We also prove that, if $|C(G)|=\frac{|G|}{2}+r$, \\
%  (i)   $\mathcal{O}_{2}(G)=2r-1$, $1\leq r \leq \frac{|G|}{2}$, then $G$ is a $\{2,3\}$-group. \\
%  (ii) $\mathcal{O}_{2}(G)=2r+1$, $0\leq r \leq \frac{|G|}{2}-2$, then $G$ is a $\{2,3\}$-group or $\{2,5\}$-group.\\

\section{Introduction} 
Throughout this paper, $\mathbb{Z}_{n}$, $Q_{2^{k}}$ and $M_{2^k}$ denote the cyclic group of order $n$,  the generalized quaternion group of order $2^{k}$, and  the modular maximal-cyclic group of order $2^{k}$ respectively. By SmallGroup$[i,j]$, we denote  the $j$-th group of order $i$ in GAP’s SmallGroups library. Suppose $|G:H|$ denotes the index of a subgroup $H$ in $G$, $\phi$ denotes the Euler totient function, and $\exp(G)$ denotes the exponent of a group $G$. Let $|G|$ and $|x|$ denote the order of a group $G$ and the order of an element $x \in G$, respectively.

%Let $G_{1}\rtimes G_{2}$ denotes a semidirect product of a group $G_{1}$ with another group $G_{2}$.  $\lcm(a,b)$ the least common multiple of two natural numbers $a$ and $b$.

It is always an interesting problem to classify finite groups $G$ by their certain properties. The classification of $G$ by the order of $C(G)$  received attention recently. It is easy to see that $|C(G)|= |G|$ if and only if $G$ is an elementary abelian $2$-group.
In $2015$, Tărnăuceanu \cite{MT} classified the groups with $|C(G)|=|G|-1$, imposing the following open problem.\\

\noindent \textbf{Open problem:} Describe the finite groups $G$ with $|C(G)| = |G|-r,$ where $2 \leq r \leq |G|-1$.\\

In $2016$, the answer was given for $r=2$ in \cite{MT1}. As a continuation, all finite groups with $|G|-5\leq |C(G)|\leq |G|-3$ and $2\leq|C(G)|\leq 13$ have been classified in \cite{AA,AA1,RJL,  LCQ,SR,SR1}. 
Also, the groups $G$ for certain values of the function $\alpha(G)=\frac{|C(G)|}{|G|}$ have been  studied by several authors in \cite{GS,GL,ML}, listing a few. 

% Further work on this topic can be found in \cite{R}, \cite{L}.  

Our first main result is the following. 
\begin{theorem}\label{mainresult}
Let $p$ be an odd prime. Then, there are infinitely many groups $G$ having $|C(G)|= \frac{|G|}{2}$,  $|C(G)|=\frac{|G|}{p^{q-1}},$ for prime $q\neq p$,  or $|C(G)|=\frac{|G|}{2}+2^{k},  k\geq 0$. 
 \end{theorem}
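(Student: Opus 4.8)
The plan is to produce, for each target prime $p$, a one-parameter family of groups obtained from a single ``seed'' group by taking direct products with elementary abelian $2$-groups. Two facts drive everything: (i) the statement recalled in the introduction that $|C(H)|=|H|$ exactly when $H$ is an elementary abelian $2$-group, so that $|C(\mathbb{Z}_2^{k})|=2^{k}$ for every $k\geq 0$; and (ii) the multiplicativity of $|C(\cdot)|$ across coprime direct factors, namely $|C(A\times B)|=|C(A)|\,|C(B)|$ whenever $\gcd(|A|,|B|)=1$. I would establish (ii) first, as a short lemma: if $\gcd(|A|,|B|)=1$ then every $(a,b)\in A\times B$ satisfies $\langle (a,b)\rangle=\langle a\rangle\times\langle b\rangle$, and the assignment sending a cyclic subgroup to the pair of its projections is a bijection $C(A\times B)\to C(A)\times C(B)$.

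With (i) and (ii) in hand, the second assertion and the prime $p=2$ case of the first assertion reduce to a direct count on two explicit $2$-group families. For $\mathbb{Z}_4\times \mathbb{Z}_2^{k}$ one sorts the elements by order: a short computation gives $2^{k}$ cyclic subgroups of order $4$ and $2^{k+1}-1$ of order $2$, whence $|C(\mathbb{Z}_4\times\mathbb{Z}_2^{k})|=3\cdot 2^{k}=\tfrac{|G|}{2}+2^{k}$, which realizes every value $k\geq 0$ in the second assertion. For $p=2$ I would run the same bookkeeping on $\mathbb{Z}_8\times \mathbb{Z}_2^{k}$, obtaining $2^{k}$ subgroups of order $8$, $2^{k}$ of order $4$, and $2^{k+1}-1$ of order $2$, so that $|C(\mathbb{Z}_8\times\mathbb{Z}_2^{k})|=2^{k+2}=\tfrac{|G|}{2}$. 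Letting $k$ range over the non-negative integers yields infinitely many groups in each case.

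For an odd prime $p$ the elementary abelian $2$-group factor is coprime to the rest, so by the multiplicativity lemma it suffices to exhibit a single odd-order seed group $A_p$ with $|C(A_p)|=|A_p|/p$: then $A_p\times \mathbb{Z}_2^{k}$ has order $|A_p|\,2^{k}$ and $|C(A_p\times\mathbb{Z}_2^{k})|=|C(A_p)|\,2^{k}=|A_p|\,2^{k}/p=|G|/p$, giving infinitely many examples as $k$ grows. For $p=3$ one may simply take $A_3=\mathbb{Z}_9$, since $|C(\mathbb{Z}_9)|=3=9/3$.

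The main obstacle is constructing the seed $A_p$ for a general odd prime $p$. Writing $n_d$ for the number of elements of order $d$, the requirement $|C(A_p)|=|A_p|/p$ is the balance $\sum_{d} n_d\,\frac{p-\phi(d)}{\phi(d)}=0$; equivalently, grouping cyclic subgroups by order, the deficits contributed by the trivial subgroup ($\phi(1)=1$) and by the subgroups of order $p$ ($\phi(p)=p-1$) must be offset exactly by the surpluses of the subgroups of order $p^{2}$ and higher. For $p=2$ the subgroups of order $4$ are neutral, since $\phi(4)=2=p$, which is precisely what makes the $2$-group families above work; for odd $p$ no order is neutral, so the seed must be engineered to balance these contributions. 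I would search for $A_p$ among the odd-order $p$-groups of small exponent, for instance metacyclic or maximal-class groups, choosing the orders so that the counts of cyclic subgroups of orders $p$ and $p^{2}$ solve the balance equation; verifying that a group realizing the required counts actually exists, and exhibiting it explicitly as with $\mathbb{Z}_9$ for $p=3$, is the delicate point and the heart of the argument.
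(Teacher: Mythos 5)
Your multiplicativity lemma $|C(A\times B)|=|C(A)|\,|C(B)|$ for $\gcd(|A|,|B|)=1$ is correct and is essentially the paper's Theorem on $|C(G\times\mathbb{Z}_p)|=2|C(G)|$; your computations for $\mathbb{Z}_4\times\mathbb{Z}_2^k$ and $\mathbb{Z}_8\times\mathbb{Z}_2^k$ are also correct. But the argument has a genuine gap for odd primes $p$: you reduce the first assertion to finding an \emph{odd-order} seed group $A_p$ with $|C(A_p)|=|A_p|/p$, exhibit one only for $p=3$, and explicitly leave the general construction as ``the delicate point and the heart of the argument.'' That existence problem is not routine (for odd-order groups every nontrivial cyclic subgroup has at least two generators, so the counting constraints are tight), and the theorem is not proved without it. The irony is that your own lemma already closes the gap if you drop the self-imposed requirement that the seed be coprime to $2$ on the \emph{other} side: since $|C(\mathbb{Z}_p)|=2$ for any odd prime $p$, any $2$-group $H$ with $|C(H)|=|H|/2$ (e.g.\ your own family $\mathbb{Z}_8\times\mathbb{Z}_2^k$) gives $|C(H\times\mathbb{Z}_p)|=2\cdot|H|/2=|H|=|H\times\mathbb{Z}_p|/p$. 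This is exactly the paper's route: it takes a $2$-group solution of $|C(G)|=|G|/2$ (it uses $M_{16}\times\mathbb{Z}_2^n$) and multiplies by $\mathbb{Z}_p$.

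A second, smaller issue concerns the quantifier on $k$. The paper reads the second assertion as ``for each fixed $k\geq 0$ there are infinitely many groups with $|C(G)|=|G|/2+2^k$'' (it relies on this with $k=0$ to prove Theorem \ref{mainresult_4}(1)), and achieves it by taking $G\times\mathbb{Z}_2^{k+1}$ where $G$ ranges over the infinitely many groups of exponent $3$, for which $|C(G)|=(|G|+1)/2$. Your family $\mathbb{Z}_4\times\mathbb{Z}_2^k$ produces exactly one group per value of $k$, so under the paper's reading it does not establish the claim; you would need to vary an additional coprime factor (e.g.\ replace $\mathbb{Z}_4$ by a varying exponent-$3$ group, or tensor with further odd-order elementary abelian factors that preserve the count) to get infinitely many examples for each fixed $k$.
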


%
%In the next result, we obtain the best possible bound of  $|C(G)|$, which depends on $\mathcal{O}_{2}(G)$.
%\begin{theorem}\label{mainresult_1}
%If $\mathcal{O}_{2}(G)=|G|-r, $ $1<r<|G|$, then $$|G|-(r-2) \leq |C(G)| \leq |G|- \frac{r-1}{2}.$$ 
%
%\end{theorem}

Since there are infinitely many groups for $|C(G)|=\frac{|G|}{2}$, it is challenging  to classify the groups for this value of $|C(G)|$. Hence, in the following result, we fix the value of $\mathcal{O}_{2}(G)$ and classify the groups $G$ having $|C(G)|=\frac{|G|}{2}$. Note that, as $|G|$ is even, $\mathcal{O}_{2}(G)$ is always an odd number.

\begin{theorem}\label{mainresult_2}
Suppose $G$ is a finite group with $|C(G)|=\frac{|G|}{2}$. Then we have the followings. 
\begin{enumerate}
      \item $\mathcal{O}_{2}(G)=1$ if and only if $G\cong \mathbb Z_8, \mathbb Z_{12}, Q_{16}$ or $ \mathbb{Z}_{3}\rtimes Q_8$.
        
\item $\mathcal{O}_{2}(G)=3$ if and only if $G\cong \mathbb Z_8 \times \mathbb Z_2, M_{16}$, SmallGroup$[32,10]:$ $ Q_8 \rtimes \mathbb Z_4$, SmallGroup$[32,13]:$ $\mathbb Z_8 \rtimes \mathbb Z_4 $, SmallGroup$[32,14]:$ $\mathbb Z_8 \rtimes \mathbb Z_4$, $Q_{16}\times \mathbb{Z}_2, \mathbb Z_{12}\times \mathbb Z_2$,$ \mathbb Z_4\times (\mathbb{Z}_{3} \rtimes \mathbb{Z}_{4})$, SmallGroup$[48,12]:$ $(\mathbb{Z}_{3} \rtimes \mathbb{Z}_{4})\rtimes \mathbb{Z}_4$, SmallGroup$[48,13]:$ $\mathbb{Z}_{12}\rtimes \mathbb{Z}_4$ or $\mathbb{Z}_2\times (\mathbb{Z}_{3}\rtimes Q_{8})$.

        \item There is no group with $\mathcal{O}_{2}(G)=\frac{|G|}{2}-1$ or $\mathcal{O}_{2}(G)=\frac{|G|}{2}-2$.

                \item $\mathcal{O}_{2}(G)=\frac{|G|}{2}-3$ if and only if $G\cong \mathbb Z_8$.

\end{enumerate}
\end{theorem}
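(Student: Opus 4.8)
The plan is to reduce everything to two elementary counting identities and then feed them into the structure theory of the Sylow $2$-subgroup. For $n\ge 1$ let $c_n$ denote the number of cyclic subgroups of $G$ of order $n$. Partitioning the elements of $G$ by the cyclic subgroup they generate gives $|G|=\sum_{n\ge 1}\phi(n)c_n$ and $|C(G)|=\sum_{n\ge 1}c_n$, while $c_1=1$ and $c_2=\mathcal{O}_2(G)$ (each involution generates its own cyclic subgroup of order $2$). Subtracting, and using $\phi(1)=\phi(2)=1$, I obtain the identity
$$|G|-2|C(G)|=-1-\mathcal{O}_2(G)+\sum_{n\ge 3}(\phi(n)-2)c_n.$$
Hence the hypothesis $|C(G)|=\tfrac{|G|}{2}$ is \emph{equivalent} to the master equation
$$\sum_{n\ge 3}(\phi(n)-2)c_n=1+\mathcal{O}_2(G),\qquad\text{together with}\qquad \sum_{n\ge 3}c_n=\tfrac{|G|}{2}-1-\mathcal{O}_2(G).$$
The key arithmetic input is that for $n\ge 3$ the number $\phi(n)$ is even, with $\phi(n)=2$ exactly when $n\in\{3,4,6\}$ and $\phi(n)\ge 4$ otherwise; so every summand on the left of the master equation is a nonnegative even integer, vanishing precisely on the cyclic subgroups of order $3,4,6$.

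I would treat the large-$\mathcal{O}_2$ cases (3) and (4) first, since there the second identity alone is decisive. If $\mathcal{O}_2(G)=\tfrac{|G|}{2}-1$ then $\sum_{n\ge3}c_n=0$, so $G$ has exponent $2$, forcing $G$ elementary abelian and $|C(G)|=|G|\ne\tfrac{|G|}{2}$, a contradiction. If $\mathcal{O}_2(G)=\tfrac{|G|}{2}-2$ then $\sum_{n\ge3}c_n=1$: all elements of order $\ge 3$ lie in a single cyclic subgroup $\langle x\rangle$, whose uniqueness forces $|x|\in\{4\}\cup\{\text{odd primes}\}$ (no proper subgroup of order $\ge 3$ is allowed); counting its elements of order $\ge 3$ against $|G|-1-\mathcal{O}_2(G)=\tfrac{|G|}{2}+1$ and invoking Lagrange's theorem yields a divisibility contradiction in every case, proving part (4). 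For part (3), $\mathcal{O}_2(G)=\tfrac{|G|}{2}-3$ gives $\sum_{n\ge 3}c_n=2$: the two cyclic subgroups of order $\ge 3$ are closed under passing to cyclic subgroups of order $\ge 3$, which pins their orders down to a short list of shapes (one containing the other with orders $(8,4)$, $(p^2,p)$, $(2p,p)$, or two incomparable ones of order $4$ or an odd prime); comparing the forced generator count $\tfrac{|G|}{2}+2$ with $|G|$ and again using Lagrange eliminates every possibility except $(8,4)$, i.e. $G\cong\mathbb{Z}_8$.

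For the small-$\mathcal{O}_2$ cases (1) and (2) the master equation does the combinatorial work and the Sylow $2$-subgroup does the rest. With $\mathcal{O}_2(G)=1$ the right-hand side is $2$, so there is exactly one cyclic subgroup whose order lies in $\{5,8,10,12\}$ (the orders with $\phi=4$) and every other cyclic subgroup has order in $\{1,2,3,4,6\}$; with $\mathcal{O}_2(G)=3$ the right-hand side is $4$, admitting either one subgroup of order in $\{7,9,14,18\}$ or two subgroups of order in $\{5,8,10,12\}$. I would then use that a finite group with a unique involution has cyclic or generalized quaternion Sylow $2$-subgroup, with that involution central, while a group with exactly three involutions has a tightly restricted Sylow $2$-subgroup (one of $\mathbb{Z}_{2^a}\times\mathbb{Z}_2$, $M_{2^a}$, $Q_{2^a}\times\mathbb{Z}_2$, and their relatives). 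The ``large'' cyclic subgroup, being essentially unique, is normal and forces the Sylow $2$-subgroup to have bounded order; a normal $2$-complement argument then controls the odd part, which can only contribute elements of order $3$, and the master equation forbids these from combining with the $2$-part to produce forbidden orders (for instance an order-$24$ element, with $\phi(24)=8$). This bounds $|G|$ and reduces the classification to a finite search, returning the stated lists (confirmed in GAP).

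The main obstacle is part (2): with $\mathcal{O}_2(G)=3$ the master equation permits several configurations of large cyclic subgroups, the candidate groups reach order $48$, and separating the genuinely distinct isomorphism types (the various $\mathbb{Z}_8\rtimes\mathbb{Z}_4$ and $Q_8\rtimes\mathbb{Z}_4$ of order $32$, and the order-$48$ extensions of $\mathbb{Z}_3\rtimes\mathbb{Z}_4$) requires a careful analysis of how the Sylow $2$-subgroup is glued to the normal odd part, for which I would lean on explicit computation. A secondary nuisance is verifying the input structure theorems for $2$-groups with exactly three involutions and checking that the ``closure under cyclic subgroups'' bookkeeping used in parts (1)--(3) is genuinely exhaustive.
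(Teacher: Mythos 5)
Your ``master equation'' is exactly the paper's key step in different clothing: writing $|G|=\sum_n\phi(n)c_n$ and $|C(G)|=\sum_n c_n$ and isolating the terms with $\phi(n)-2>0$ recovers precisely the paper's properties $(P)$, $(P_1)$, $(P_2)$ (one cyclic subgroup with $\phi(m)=4$ and the rest with $\phi(m_i)=2$ when $\mathcal{O}_2(G)=1$; one with $\phi=6$ or two with $\phi=4$ when $\mathcal{O}_2(G)=3$), which the paper derives by distributing leftover elements into equivalence classes via even partitions of $s$. Your treatment of parts (3) and (4) is correct and, if anything, cleaner than the paper's: the identity $\sum_{n\ge3}c_n=\frac{|G|}{2}-1-\mathcal{O}_2(G)$ together with the generator count $\phi(|A|)+\phi(|B|)=\frac{|G|}{2}+2$ and Lagrange does eliminate every configuration except $\mathbb{Z}_8$, matching the paper's case table (a)--(d).

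The gap is in parts (1) and (2), and it is exactly the step you wave at with ``forces the Sylow $2$-subgroup to have bounded order.'' The master equation cannot bound $|G|$ by itself, because the subgroups of order $3$, $4$, $6$ contribute $\phi(n)-2=0$ and are therefore invisible to it: for instance, a $2$-group with exactly $3$ involutions, exactly two cyclic subgroups of order $8$, and arbitrarily many cyclic subgroups of order $4$ satisfies $|C(P)|=|P|/2$ identically, whatever its order. So the finiteness of the eventual GAP search is a genuine group-theoretic assertion that must be proved, and this is where the paper does its real work: it shows (by exhaustive machine check) that every group of order $128$, and every group of order $2^5\cdot 3$, contains a configuration forbidden by $(P_1)$ or $(P_2)$ --- more than three involutions, a $\mathbb{Z}_{16}$, or too many copies of $\mathbb{Z}_8$ or $\mathbb{Z}_{12}$ --- and hence that $|G|\le 64$ resp.\ $96$. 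Your proposed substitute, a classification of $2$-groups with exactly three involutions into ``$\mathbb{Z}_{2^a}\times\mathbb{Z}_2$, $M_{2^a}$, $Q_{2^a}\times\mathbb{Z}_2$ and their relatives,'' is not a standard quotable theorem and is false as a clean trichotomy (e.g.\ $Q_8\times Q_8$ has exactly three involutions), so you would still have to supply the bounding argument by hand or by machine. Relatedly, in the $\mathcal{O}_2(G)=3$ case the involutions need not be central and the ``large'' cyclic subgroups $K_1,K_2$ need not be normal (the paper has to argue separately the case where they are conjugate, as in its Lemmas on $K_1,K_2\cong\mathbb{Z}_8$ and $\cong\mathbb{Z}_{12}$), so the normal-$2$-complement step also needs justification. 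For part (1) your outline is essentially complete once you note, as the paper does, that $m\in\{5,10\}$ is killed by the central involution producing a second subgroup with $\phi=4$, and that $m=8$ with a $\mathbb{Z}_3$ present produces a $\mathbb{Z}_{24}$ with $\phi(24)=8$; there the cyclic/generalized-quaternion Sylow theorem plus the exponent bound genuinely does give $|P|\le 16$ and the search closes.
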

In the following results, we classify the groups $G$ having $|C(G)|=\frac{|G|}{2}-1$ or $ \frac{|G|}{2}+1$ for some fix values of $\mathcal{O}_{2}(G)$.
 \begin{theorem}\label{mainresult_3}
Suppose $G$ is a finite group with $|C(G)|=\frac{|G|}{2}-1$. Then we have the followings.   
\begin{enumerate} 
        \item $\mathcal{O}_{2}(G)=1$ if and only if $G\cong \mathbb Z_{10} $ or SmallGroup$[20,1]:$ $\mathbb{Z}_{5}\rtimes \mathbb{Z}_{4}$.

        \item There is no group with $\mathcal{O}_{2}(G)=\frac{|G|}{2}-2$ and $\mathcal{O}_{2}(G)=\frac{|G|}{2}-3$.
        
        \item $\mathcal{O}_{2}(G)=\frac{|G|}{2}-4$ if and only if $G\cong \mathbb Z_{10}$.

    \end{enumerate}
\end{theorem}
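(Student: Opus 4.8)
The plan is to reduce everything to two counting identities and then split according to the prescribed value of $\mathcal{O}_{2}(G)$. For each $d\geq 1$ let $c_{d}$ denote the number of cyclic subgroups of $G$ of order $d$, so that $c_{1}=1$ and $c_{2}=\mathcal{O}_{2}(G)$. Counting elements by the cyclic subgroup they generate gives $|G|=\sum_{d}c_{d}\phi(d)$, while $|C(G)|=\sum_{d}c_{d}=\frac{|G|}{2}-1$ by hypothesis. Eliminating the terms $d=1,2$ (where $\phi(d)=1$) and combining the two relations, I would first record the master identities
$$\sum_{d\geq 3}c_{d}=\frac{|G|}{2}-2-\mathcal{O}_{2}(G),\qquad \sum_{d\geq 3}c_{d}\bigl(\phi(d)-2\bigr)=\mathcal{O}_{2}(G)+3.$$
Since $\phi(d)\geq 2$ for every $d\geq 3$, with equality exactly for $d\in\{3,4,6\}$, the second identity is the main lever: it controls, in terms of $\mathcal{O}_{2}(G)$, how many cyclic subgroups can have $\phi(d)\geq 3$.

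For the large-$\mathcal{O}_{2}$ statements in (2) and (3), writing $\mathcal{O}_{2}(G)=\frac{|G|}{2}-j$ turns the first identity into $\sum_{d\geq 3}c_{d}=j-2$, so there are $0$, $1$, or $2$ cyclic subgroups of order $\geq 3$ for $j=2,3,4$. For $j=2$ the second identity becomes $0=\frac{|G|}{2}+1$, an immediate contradiction. For $j=3$ there is a single cyclic subgroup, of some order $m\mid|G|$, with $\phi(m)=\frac{|G|}{2}+2$; then $m\geq\phi(m)+1>\frac{|G|}{2}$ forces $m=|G|$ and hence $G$ cyclic, which a short divisor count rules out. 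For $j=4$ there are exactly two cyclic subgroups $H_{1},H_{2}$ of order $\geq 3$ with $\phi(m_{1})+\phi(m_{2})=\frac{|G|}{2}+3$; here I would use that every element of order $\geq 3$ lies in $H_{1}\cup H_{2}$, so that $G\setminus(H_{1}\cup H_{2})$ consists only of involutions. In the comparable case $H_{1}\leq H_{2}$ the divisor argument forces the larger subgroup to equal $G$, so $G$ is cyclic and a direct element count yields $G\cong\mathbb{Z}_{10}$ with $(m_{1},m_{2})=(5,10)$; the incomparable case (using that $H_{1},H_{2}$ are normal when $m_{1}\neq m_{2}$, together with $m_{1},m_{2}\mid|G|$) reduces to a finite totient-divisor Diophantine problem with no further solutions.

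Part (1), where $\mathcal{O}_{2}(G)=1$, is the genuinely hard case and I expect it to be the main obstacle. Now the first identity gives $\frac{|G|}{2}-3$ cyclic subgroups of order $\geq 3$, a number growing with $|G|$, so no naive enumeration is available; instead the second identity reads $\sum_{d\geq 3}c_{d}(\phi(d)-2)=4$, forcing all but at most four of these subgroups to have order in $\{3,4,6\}$. The unique involution $z$ is central, so the Sylow $2$-subgroup is cyclic or generalized quaternion, and multiplication by $z$ is a bijection between the cyclic subgroups of order $3$ and those of order $6$, giving $c_{3}=c_{6}$. When the Sylow $2$-subgroup is cyclic, it forces a normal $2$-complement $G=N\rtimes P$ with $N$ of odd order equal to the set of odd-order elements; in the smallest case $P=\mathbb{Z}_{2}$ this collapses to $G\cong N\times\mathbb{Z}_{2}$, and the condition becomes $|C(N)|=\frac{|N|-1}{2}$. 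This is a companion problem on odd-order groups that I would solve with the same identities: they force $N$ to contain exactly one subgroup of order $5$ and otherwise only subgroups of order $3$, whence $N\cong\mathbb{Z}_{5}$ and $G\cong\mathbb{Z}_{10}$.

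The remaining work is to handle $|P|\geq 4$ and the generalized quaternion Sylow case. Here I would combine $c_{3}=c_{6}$, the fact that every element of order $4$ squares to $z$, and the excess identity to bound $|P|$ and the odd part simultaneously, thereby bounding $|G|$; the group $\mathbb{Z}_{5}\rtimes\mathbb{Z}_{4}$ of order $20$ should emerge as the unique additional solution, and a finite inspection over the resulting range confirms that no sporadic examples survive. The delicate points, and the places where the argument is most likely to need care, are making the bound on $|G|$ explicit and verifying that the action of $P$ on $N$ cannot manufacture extra small cyclic subgroups that would violate $\sum_{d\geq 3}c_{d}(\phi(d)-2)=4$.
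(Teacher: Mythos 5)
Your two counting identities are correct and in fact repackage the paper's ``key step'' more cleanly: the relation $\sum_{d\geq 3}c_{d}\bigl(\phi(d)-2\bigr)=\mathcal{O}_{2}(G)+3$ immediately yields the same case division the paper obtains by distributing leftover elements into equivalence classes (for $\mathcal{O}_{2}(G)=1$: either one cyclic subgroup with $\phi(m)=6$ or two with $\phi(m)=4$). Your treatment of parts (2) and (3) is essentially sound and parallels the paper's analysis of $\mathcal{O}_{2}(G)=\frac{|G|}{2}-j$: the cases $j=2$ and $j=3$ collapse exactly as you say, and in $j=4$ the comparable case gives $\mathbb{Z}_{10}$. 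The incomparable case does reduce to a finite list, since neither $m_{i}$ can have a proper divisor $\geq 3$ (such a divisor would yield a third cyclic subgroup of order $\geq 3$ or force comparability), so $m_{i}\in\{4\}\cup\{\text{odd primes}\}$, and each of $(4,4)$, $(4,p)$, $(p,q)$, $(p,p)$ dies by the arguments you indicate; but you should actually write that check out rather than call it ``a finite totient--divisor Diophantine problem with no further solutions.''

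The genuine gap is in part (1), which you yourself flag as the main obstacle: the classification is never carried out. Two things are missing. First, you do not eliminate the $\phi(m)=6$ branch ($m\in\{7,9,14,18\}$); this is quick with your own tools --- the unique involution $z$ is central, so a subgroup of order $7$ or $9$ together with $\langle z\rangle$ produces a second cyclic subgroup with $\phi\geq 6$ and violates $\sum_{d\geq 3}c_{d}(\phi(d)-2)=4$ --- but it must be said. Second, and more seriously, in the two-subgroup branch you must pin down the pair $\{K_{1},K_{2}\}$ among $m_{i}\in\{5,8,10,12\}$: ruling out $\{\mathbb{Z}_{8},\mathbb{Z}_{8}\}$, $\{\mathbb{Z}_{12},\mathbb{Z}_{12}\}$ and $\{\mathbb{Z}_{8},\mathbb{Z}_{12}\}$ (the paper's Lemma~\ref{lemma8}), then forcing $K_{1}\cong\mathbb{Z}_{10}$, $K_{2}\cong\mathbb{Z}_{5}$ and $|G|=2^{a}\cdot 5$ with $a\leq 4$ (Lemma~\ref{lemma9}), is where all the work lies. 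Your Sylow-theoretic observations (central unique involution, cyclic or generalized quaternion Sylow $2$-subgroup, Burnside normal $2$-complement, $c_{3}=c_{6}$) are correct and could in principle substitute for part of this, and your $P=\mathbb{Z}_{2}$ reduction to $|C(N)|=\frac{|N|-1}{2}$ for odd $N$ is a nice link to the paper's Theorem~\ref{mainresult_5}(1); but the promised bound on $|G|$ for $|P|\geq 4$ and the generalized quaternion case, and the emergence of $\mathbb{Z}_{5}\rtimes\mathbb{Z}_{4}$ as the unique additional solution, are asserted rather than proved. As written the proposal therefore does not establish part (1).
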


  \begin{theorem}\label{mainresult_4}
Suppose $G$ is a finite group with $|C(G)|=\frac{|G|}{2}+1$. Then we have the followings.  
\begin{enumerate} 
\item There are infinitely many groups with $\mathcal{O}_{2}(G)=1$.
\item  There is no group with $\mathcal{O}_{2}(G)=3.$
    \item $\mathcal{O}_{2}(G)=\frac{|G|}{2}$ if and only if $G\cong \mathbb{Z}_{2}$.
    \item $\mathcal{O}_{2}(G)=\frac{|G|}{2}-1$ if and only if $G\cong \mathbb{Z}_{4}$.
    \item $\mathcal{O}_{2}(G)=\frac{|G|}{2}-2$ if and only if $G\cong \mathbb{Z}_{6}$.
\end{enumerate}
\end{theorem}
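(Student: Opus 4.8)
The plan is to reduce everything to one counting identity. Partitioning the elements of $G$ according to the cyclic subgroup they generate gives $|G| = \sum_{H \in C(G)} \phi(|H|)$, and separating the trivial subgroup, the $\mathcal{O}_2(G)$ subgroups of order $2$, and the $k$ cyclic subgroups of order $\ge 3$ gives $|C(G)| = 1 + \mathcal{O}_2(G) + k$. Substituting $|C(G)| = \frac{|G|}{2}+1$ yields the two relations I will use throughout (these are exactly the relations underlying Theorem~\ref{mainresult_1}):
$$\mathcal{O}_2(G) + k = \frac{|G|}{2}, \qquad \sum_{|H|\ge 3}\bigl(\phi(|H|)-1\bigr) = \frac{|G|}{2}-1.$$

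For part (1) I would exhibit the explicit infinite family $G_n = \mathbb{Z}_2 \times \mathbb{Z}_3^{\,n}$, $n \ge 1$. Each $G_n$ has a unique involution, and its cyclic subgroups are the trivial one, the $\frac{3^n-1}{2}$ lines of $\mathbb{Z}_3^{\,n}$, the involution, and one cyclic subgroup of order $6$ above each line, so $|C(G_n)| = 3^n + 1 = \frac{|G_n|}{2}+1$; this realizes the $k=0$ instance of Theorem~\ref{mainresult} with $\mathcal{O}_2(G_n)=1$. Parts (3)--(5) then follow from $\mathcal{O}_2(G)+k = \frac{|G|}{2}$. If $\mathcal{O}_2(G) = \frac{|G|}{2}$ then $k=0$, so $G$ has no element of order $\ge 3$ and is elementary abelian, whence $|C(G)|=|G|$ forces $G\cong\mathbb{Z}_2$. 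If $\mathcal{O}_2(G) = \frac{|G|}{2}-1$ then $k=1$: the unique cyclic subgroup $H$ of order $\ge 3$ has no proper subgroup of order $\ge 3$, so $|H|$ is prime or $4$, and $\phi(|H|) = \frac{|G|}{2}$ together with Lagrange (an odd prime $p$ cannot divide $2(p-1)$) leaves only $G\cong\mathbb{Z}_4$. If $\mathcal{O}_2(G) = \frac{|G|}{2}-2$ then $k=2$; analyzing the two cyclic subgroups of order $\ge 3$ as nested or incomparable and comparing $\phi(n_1)+\phi(n_2)=\frac{|G|}{2}+1$ with the divisibility constraints $n_i\mid|G|$ leaves only $\{n_1,n_2\}=\{3,6\}$, i.e. $G\cong\mathbb{Z}_6$. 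The reverse implications are direct checks on the listed groups.

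The substantive case is part (2). Assuming $\mathcal{O}_2(G)=3$, the second identity gives $\sum_{|H|\ge 3}(\phi(|H|)-2) = 2$; since $\phi$ never equals $3$, exactly one cyclic subgroup $H_0$ satisfies $\phi(|H_0|)=4$ and every other cyclic subgroup of order $\ge 3$ has order in $\{3,4,6\}$. Hence $|H_0|\in\{5,8,12\}$ (the value $10$ is excluded, since $\mathbb{Z}_{10}$ would contain a second $\phi=4$ subgroup). A unique cyclic subgroup of a fixed order is characteristic, hence normal, and any odd-order or large-order element commuting with $H_0$ would create a cyclic subgroup with $\phi>4$; this centralizer argument forces $C_G(H_0)=H_0$ and $G/H_0\hookrightarrow \mathrm{Aut}(H_0)$. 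For $|H_0|=5$ this bounds $|G|\le 20$, and the surviving candidates (orders $10$ and $20$, the latter being $\mathbb{Z}_5\rtimes\mathbb{Z}_4$) all have $\mathcal{O}_2\in\{1,5\}$. For $|H_0|=8$ the same argument shows $G$ is a $2$-group; its three involutions form a characteristic Klein four $V$ (otherwise a product of two of them has order $>2$ and $\langle t_1,t_2\rangle$ is dihedral with at least five involutions), and $H_0V$ is an order-$16$ group with a $\mathbb{Z}_8$ and exactly three involutions, so $H_0V\cong \mathbb{Z}_8\times\mathbb{Z}_2$ or $M_{16}$ — both of which contain two cyclic subgroups of order $8$, contradicting the uniqueness of $H_0$.

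I expect the main obstacle to be the case $|H_0|=12$. Here $C_G(\mathbb{Z}_{12})=\mathbb{Z}_{12}$ and $G/\mathbb{Z}_{12}\hookrightarrow\mathrm{Aut}(\mathbb{Z}_{12})\cong\mathbb{Z}_2\times\mathbb{Z}_2$, so the centralizer argument only narrows $|G|$ to $\{24,48\}$. Ruling these out cleanly requires either a careful hand analysis of the faithful action of $G/\mathbb{Z}_{12}$ on $\mathbb{Z}_{12}$, or a direct inspection (for instance in GAP) confirming that no group of order $24$ or $48$ simultaneously possesses a unique $\mathbb{Z}_{12}$, exactly three involutions, and $|C(G)|=\frac{|G|}{2}+1$. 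All other cases close by the uniform $\phi$-counting and the Klein-four argument, so the entire proof hinges on making this final finite elimination airtight.
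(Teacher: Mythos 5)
Your proposal is correct and, in fact, supplies considerably more argument than the paper does: the paper's entire proof of parts (2)--(5) is the single remark that they ``can be proved using similar ideas as used in proof of Theorem~\ref{mainresult_2} and Theorem~\ref{mainresult_3}.'' Your starting identity $\sum_{H\in C(G)}\phi(|H|)=|G|$ is the same decomposition as the paper's key step (distributing elements into generator-classes and partitioning the surplus $s$), just written as an equation, and your treatment of (1) and (3)--(5) matches what that machinery yields; your family $\mathbb{Z}_2\times\mathbb{Z}_3^{\,n}$ in (1) is a special case of the paper's construction $G\times\mathbb{Z}_2$ with $\exp(G)=3$. Where you genuinely diverge is in part (2): after isolating the unique cyclic subgroup $H_0$ with $\phi(|H_0|)=4$, you close the cases structurally --- $C_G(H_0)=H_0$ forces $G/H_0\hookrightarrow \mathrm{Aut}(H_0)$ and hence a bound on $|G|$, and in the $2$-group case three involutions that fail to commute would generate a dihedral group with at least five involutions, so they span a Klein four subgroup $V$ and $H_0V$ is an order-$16$ group forced to be $\mathbb{Z}_8\times\mathbb{Z}_2$ or $M_{16}$, each containing a second $\mathbb{Z}_8$. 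The paper's analogues (properties $(P_1)$, $(P_2)$ and Lemmas~\ref{lemma4}--\ref{lemma7}) instead discharge the bounded orders by GAP. Your route buys a more conceptual, largely computer-free argument; the paper's buys uniformity across all of its classification theorems. The one place your write-up is not fully closed is the $|H_0|=12$ case, where you reduce to $|G|\in\{24,48\}$ and explicitly defer the final elimination to a finite inspection; that is a genuine, though small and clearly flagged, residue --- but it is precisely the kind of bounded check the paper itself delegates to GAP, so it is not a methodological gap relative to the paper's own standard.
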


\begin{theorem}\label{mainresult_5}
 Let $G$ be a group such that $|C(G)|=\frac{|G|+(2r+1)}{2}, r\geq -1$ is an integer. 
 Then \begin{enumerate}
        \item $r=-1$ if and only if $G\cong \mathbb{Z}_5$.
        
\item $r=0$ if and only if $G\cong\{e\}$ or $G$ is a group with $\exp(G)=3$.
\item For $r\geq 1$, there is no group.
    \end{enumerate}

 \end{theorem}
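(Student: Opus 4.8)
The plan is to reduce the whole statement to a single counting identity. For each positive integer $d$ let $c_d$ denote the number of cyclic subgroups of $G$ of order $d$; then $|C(G)|=\sum_d c_d$, and since a cyclic subgroup of order $d$ has exactly $\phi(d)$ generators, $|G|=\sum_d \phi(d)\,c_d$. The first thing I would record is that the integrality of $|C(G)|$ together with $|C(G)|=\frac{|G|+(2r+1)}{2}$ forces $|G|+2r+1$ to be even, hence $|G|$ is odd; in particular $c_2=0$ and every order $d$ occurring in $G$ is odd. Multiplying the hypothesis by $2$ and subtracting gives
\[
2r+1 \;=\; 2|C(G)|-|G| \;=\; \sum_{d}\bigl(2-\phi(d)\bigr)c_d .
\]
Isolating $d=1$ (which contributes $+1$ since $c_1=1$) and $d=3$ (which contributes $0$ since $\phi(3)=2$), and using that no even $d$ occurs, this collapses to
\[
2r \;=\; \sum_{\substack{d\ge 5,\ d\ \mathrm{odd}}}\bigl(2-\phi(d)\bigr)c_d .
\]

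Now a sign analysis does most of the work. For odd $d\ge 5$ one has $\phi(d)\ge 4$, so every term on the right is $\le -2c_d\le 0$; hence $r\le 0$, which is exactly part $(3)$. For part $(2)$, the equation $r=0$ forces the right-hand sum to vanish, and since each summand is $\le -2c_d$ this happens if and only if $c_d=0$ for all odd $d\ge 5$, i.e.\ $\exp(G)\in\{1,3\}$; conversely I would verify directly that any group with $\exp(G)$ dividing $3$ has $|C(G)|=1+\tfrac{|G|-1}{2}=\tfrac{|G|+1}{2}$, which recovers the trivial group and the groups of exponent $3$.

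The delicate case is $r=-1$, where the sum must equal $-2$. Since the $d=5$ term contributes $-2c_5$ while every odd $d\ge 7$ gives $\phi(d)\ge 6$ and so contributes $\le -4c_d$, the unique solution is $c_5=1$ and $c_d=0$ for all odd $d\ge 7$; thus the element orders of $G$ lie in $\{1,3,5\}$ and $G$ has a unique subgroup $P\cong\mathbb{Z}_5$. The main obstacle is ruling out the coexistence of elements of orders $3$ and $5$, and here I would argue structurally. Because the element orders divide $15$, the Sylow $5$-subgroup has exponent $5$, and with a unique subgroup of order $5$ it must equal $P$; uniqueness then makes $P$ normal. Since $|G|$ is odd, the image of the conjugation map $G\to\mathrm{Aut}(P)\cong\mathbb{Z}_4$ has odd order dividing $4$, hence is trivial, so $P\le Z(G)$. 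If some $a$ had order $3$, it would commute with a generator $b$ of $P$, making $ab$ an element of order $15$ and contradicting $c_d=0$ for $d\ge 7$. Therefore $G$ has no element of order $3$, forcing $G=P\cong\mathbb{Z}_5$. Finally I would check that $\mathbb{Z}_5$ indeed satisfies $|C(G)|=2=\tfrac{5-1}{2}$, i.e.\ $r=-1$, completing part $(1)$.
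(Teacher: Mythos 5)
Your proof is correct, and it reaches the same structural conclusions as the paper by a cleaner, more uniform route. The paper's argument runs all three parts through its combinatorial ``key step'': it distributes the non-generator elements among the equivalence classes $cl(x)$ and analyses the even partitions of the leftover, which for $r=-1$ yields exactly your conditions ($c_5=1$, all other nontrivial cyclic subgroups of order $3$) in the guise of properties $(A_5)$ and $(B_5)$, and for $r\ge 1$ yields the same ``leftover elements would have to be involutions'' contradiction that your inequality $2r=\sum_{d\ge 5}(2-\phi(d))c_d\le 0$ encodes. Your weighted-sum identity packages that bookkeeping into a single displayed equation and makes the sign analysis transparent, which is a genuine gain in economy; in particular parts (2) and (3) fall out immediately rather than requiring a separate partition discussion. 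For the delicate $r=-1$ case the two arguments diverge only in how they kill the coexistence of orders $3$ and $5$: the paper notes $N\cong\mathbb{Z}_5$ is normal (unique cyclic subgroup of its order) and forms $NH_i\cong\mathbb{Z}_5\rtimes\mathbb{Z}_3\cong\mathbb{Z}_{15}$, while you upgrade normality to centrality via the odd-order image in $\mathrm{Aut}(\mathbb{Z}_5)\cong\mathbb{Z}_4$ and produce an element of order $15$ directly --- the same obstruction, reached from the element side rather than the subgroup side. Both routes are complete; yours has the additional merit of not depending on the paper's slightly informally stated key step, which was set up for groups of even order and is only adapted implicitly to the odd-order setting here.
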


 The next result gives the bound on the number of prime divisors of the order of group $G$ when  $|C(G)|=\frac{|G|}{2}+m$. 
\begin{lemma}\label{bound}
Let $G$ be a group with $|C(G)|=\frac{|G|}{2}+m$, $m$ is an integer. If $p$ is a prime divisor of $|G|$, then $p\leq \mathcal{O}_2(G)+4-2m$.
\end{lemma}
It follows from the above result that, if $G$ is a group with $|C(G)|=\frac{|G|}{2}+m$, then $\mathcal{O}_2(G)>2m-3$.
Since there are infinitely many groups $G$ with $|C(G)|=\frac{|G|}{2}, \frac{|G|}{2}+2^k$ for $k \geq 0$ (by Theorem \ref{mainresult}), it is natural to ask to classify the groups $G$ by fixing $\mathcal{O}_{2}(G)$. Hence, we state the following open question.\\

\noindent \textbf{Open problem:} Classify the finite groups $G$ with $|C(G)|=\frac{|G|}{2}+ m$, for a fixed integer $m$, for the values of $\mathcal{O}_{2}(G)$, where $2m-3<\mathcal{O}_{2}(G)< |C(G)|$.

\section{prerequisites}

 First, we recall the following basic result of group theory which will be used repeatedly in this paper.
\begin{lemma}\label{normal}
    Let $G$ be a finite group and $H\leq G$, if $H$ is a unique cyclic subgroup of order $|H|$, then $H\trianglelefteq G$. 
\end{lemma}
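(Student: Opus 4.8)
The plan is to use the standard conjugation argument, which exploits the hypothesis that $H$ is the \emph{unique} cyclic subgroup of its order. First I would fix an arbitrary element $g \in G$ and examine the conjugate subgroup $gHg^{-1}$. The key observation is that conjugation by $g$ is an inner automorphism of $G$, and any automorphism sends a cyclic subgroup to a cyclic subgroup of the same order. Concretely, if $H = \langle x \rangle$, then $gHg^{-1} = \langle gxg^{-1}\rangle$ is cyclic, and since conjugation is a bijection of $G$ onto itself we have $|gHg^{-1}| = |H|$. Thus $gHg^{-1}$ is again a cyclic subgroup of $G$ of order exactly $|H|$.

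Next I would invoke the uniqueness hypothesis. Since $gHg^{-1}$ is a cyclic subgroup of order $|H|$, and $H$ is by assumption the \emph{only} cyclic subgroup of $G$ of that order, the two must coincide: $gHg^{-1} = H$. As the element $g \in G$ was arbitrary, this equality holds for every $g \in G$, which is precisely the definition of $H \trianglelefteq G$. This completes the argument.

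I do not expect any serious obstacle here; the result is elementary and the proof is short. The only point requiring a line of justification is that conjugation preserves both cyclicity and order, and this is immediate from the fact that inner automorphisms are isomorphisms. The substance of the statement lies entirely in combining this preservation property with the uniqueness assumption, so the proof is essentially a direct verification.
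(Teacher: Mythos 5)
Your proof is correct and is the standard argument: conjugation is an automorphism preserving cyclicity and order, so uniqueness forces $gHg^{-1}=H$ for all $g\in G$. The paper states this lemma without proof as a basic fact, and your argument is exactly the intended one.
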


Define a relation $\sim$ on $G$ by the following:  $a \sim b$ if $a$ and $b$ are the generators of the same cyclic subgroup of $G$. Then, $\sim$ is an equivalence relation on $G$. Let $cl(x)$ denote the equivalence class containing $x$. Now, we describe a key step, the first stair towards the proof of the main results.
\subsection{\textbf{Key step:}}\label{keystep} Let $\mathcal{O}_{2}(G)=r \neq 0$. 
Suppose $|G|=2n$ and $\{x_i\}_{i=1}^{2n}$ be the elements of $G$ such that $x_1 = e$ and $x_i^2 = e$ for $2 \leq i \leq r+1$.
Suppose $|C(G)|=k$. Then $k \geq r+1$, without loss of generality, let $x_1, x_2, \cdots,x_k$ be the generators of those $k$ cyclic subgroups of $G$.

We have $cl(x_1) = \{e\}$ and $cl(x_i) = \{x_i\}$ for $2 \leq i \leq r+1$.
Thus $|cl(x_j)| \geq 2~ $ for  $~r+2\leq j \leq k $ and so $cl(x_j)$ must contain at least one element $x_t$ for some $t\in \{k+1, \cdots, 2n\}$. 
First, we distribute exactly one element from $\{x_{k+1}, \cdots, x_{2n}\}$ to each of $cl(x_j)$, then we are left with $s=2(n-k)+r+1$ many elements which are not assigned to any class yet.

\begin{center}
\begin{tikzpicture}[every node/.style={anchor=base}]
    \node (e=x1) at (-0.3,0) {$e=x_1$};
    \node at (2.7,0.6) {\small \text{order $2$ elements}};
    \draw[decorate,decoration={brace,amplitude=5pt}] (0.8,0.3) -- (4.4,0.3);
    \node (x2) at (1,0) {$x_2$};
    \node (x3) at (2,0) {$x_3$};
    \node (dots1) at (3,0) {$\cdots$};
    \node (x1) at (4,0) {$x_{r+1}$};
    \node (xkplus2) at (5.1,0) {$x_{r+2}$};
    \node (dots2) at (6.1,0) {$\cdots$};
    \node (x2k) at (7.1,0) {$x_{k}$};
    \node (dots2) at (6.1,-0.6) {$\cdots$};
    \node (xkplus1) at (5.1,-1.2) {$x_{k+1}$};
     \node (dots2) at (6.1,-1.2) {$\cdots$};
    \node (x2kminuskplus1) at (7.2,-1.2) {$x_{2k - (r+1)}$};
    \draw[<->] (5.1,-0.2) -- (5.1,-1.0);
    \draw[<->] (7.1,-0.2) -- (7.1,-1.0);
    \node (x2kmins) at (8.8,-1.2) {$x_{2k-r}$};
    \node (dots3) at (9.8,-1.2) {$\cdots$};
    \node (x2n) at (10.8,-1.2) {$x_{2n}$};
    \node at (9.6,-1.9) {\small \text{ $s$ many elements}};
    \draw[decorate,decoration={brace,amplitude=5pt}] (11.2,-1.4) -- (8.2,-1.4);
\end{tikzpicture}
\end{center}
Now assigning the remaining $s$ elements to the equivalence classes  is equivalent to considering all the partitions of $``s"$. We can ignore those partitions which contain at least one odd number as in those cases, the no. of generators of the corresponding cyclic subgroup will be odd, a contradiction to the fact that $\phi(k)$ is even,  $\forall k\in \mathbb{N}_{\geq 3}$. 
So we consider even partitions of $s$ and assign those $s$ elements in the equivalence classes with each possibilities.
\\

\begin{theorem}\label{|G|/p}
Let $p$ be an odd prime  such that $p \nmid |G|$ or $p=2$.  Then $|C(G\times \mathbb{Z}_{p})|=2|C(G)|$.
\end{theorem}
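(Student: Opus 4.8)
The plan is to partition the cyclic subgroups of $H := G \times \mathbb{Z}_p$ according to how they meet the direct factor $\mathbb{Z}_p$. Writing $\pi_1 \colon H \to G$ and $\pi_2 \colon H \to \mathbb{Z}_p$ for the two projections, every cyclic subgroup $K \leq H$ has $\pi_2(K)$ equal either to the trivial subgroup or to all of $\mathbb{Z}_p$, since $\mathbb{Z}_p$ has no proper nontrivial subgroup. This splits $C(H)$ into the \emph{type A} subgroups with $\pi_2(K)=0$, which are exactly the cyclic subgroups of $G\times\{0\}$ and hence are in a natural bijection with $C(G)$, and the \emph{type B} subgroups with $\pi_2(K)=\mathbb{Z}_p$. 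It therefore suffices to prove that the type B subgroups are also in bijection with $C(G)$; the natural candidate is $K \mapsto \pi_1(K)$, which lands in $C(G)$ because the projection of a cyclic group is cyclic.

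The core of the argument is to show that $K \mapsto \pi_1(K)$ restricts to a bijection from the type B subgroups onto $C(G)$. For surjectivity, given a cyclic $L=\langle g\rangle \leq G$, I would exhibit an explicit type B subgroup projecting onto it: when $\gcd(|L|,p)=1$ the subgroup $L\times\mathbb{Z}_p$ is itself cyclic and works, and when $p=2$ with $|L|$ even one takes instead the ``graph'' $\langle (g,1)\rangle$, whose order is $\mathrm{lcm}(|g|,2)=|g|$, so that its projection is again $L$. For injectivity I would analyze $K\cap(\{e\}\times\mathbb{Z}_p)$, which is again either trivial or the whole factor. If it is the whole factor then $K=\pi_1(K)\times\mathbb{Z}_p$; if it is trivial then $\pi_1|_K$ is injective and $K$ is the graph of a homomorphism $\pi_1(K)\to\mathbb{Z}_p$. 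Comparing these two possibilities against the prime order $p$ of the second factor is what pins down $K$ uniquely from $\pi_1(K)$.

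The two hypotheses enter exactly at this uniqueness step, and this is where I expect the main obstacle to lie. When $p$ is odd and $p\nmid|G|$, a graph-type $K$ would be a cyclic subgroup of order divisible by $p$ with $\pi_1|_K$ injective, forcing a subgroup of $G$ of order divisible by $p$ — impossible; so only the case $K=L\times\mathbb{Z}_p$ survives and uniqueness is immediate. (For this coprime case one could alternatively bypass subgroups entirely and count elements by order, using that $\phi(dp)=(p-1)\phi(d)$.) When $p=2$ the coprimality can fail, and the delicate point is that for $L$ of even order $L\times\mathbb{Z}_2$ is \emph{not} cyclic, so every type B subgroup over such an $L$ must be of graph type; one must then check that the graphs $\langle(h,1)\rangle$ coming from the various generators $h$ of $L$ all coincide. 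This reduces to the observation that an integer coprime to the even number $|L|$ is necessarily odd, so each generator of $L$ is realized as $(h,1)^{j}$ for a suitable odd exponent $j$ inside a single subgroup, forcing all these graphs to be equal. Assembling the two bijections then gives $|C(H)| = |C(G)| + |C(G)| = 2|C(G)|$.
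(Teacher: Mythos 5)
Your argument is correct, but it proceeds quite differently from the paper. You count cyclic subgroups of $G\times\mathbb{Z}_p$ structurally, splitting them by the image under the projection $\pi_2$ onto $\mathbb{Z}_p$ and exhibiting an explicit bijection between each of the two resulting classes and $C(G)$; the two hypotheses of the theorem enter through two different mechanisms (in the coprime case every ``type B'' subgroup is forced to be $L\times\mathbb{Z}_p$, while for $p=2$ and $|L|$ even it is forced to be the unique graph $\{(x,f(x)):x\in L\}$ of the unique surjection $f\colon L\to\mathbb{Z}_2$). The paper instead uses the identity $|C(G)|=\sum_{x\in G}\tfrac{1}{\phi(|x|)}$ and performs a three-line computation, splitting the sum over $(x,y)$ according to whether $y=0$ and whether $p$ divides $|x|$, arriving at the sharper identity
\begin{equation*}
|C(G\times\mathbb{Z}_p)|=2|C(G)|+(p-2)\sum_{\substack{x\in G\\ p\mid |x|}}\frac{1}{\phi(|x|)},
\end{equation*}
valid for every prime $p$, with the hypotheses invoked only at the very end to make the correction term vanish (either $p-2=0$ or the index set is empty). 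The paper's route is shorter and yields extra information when neither hypothesis holds, whereas yours gives a concrete description of all cyclic subgroups of the product and avoids relying on the multiplicativity of $\phi$; both are complete proofs of the stated result.
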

\begin{proof}
   Recalling that the number of cyclic subgroups of a finite group $G$ can be computed as
follows  $$|C(G)|=\sum_{x\in G}\frac{1}{\phi(|x|)}.$$ Using it for $\widetilde{G}=G\times\mathbb{Z}_{p}$, we get 
\begin{equation*}
    \begin{aligned}
        |C(\widetilde{G})|&=\sum_{(x,y)\in \widetilde{G}} \frac{1}{\phi(lcm(|x|,|y|))}\\
        &= \sum_{y=0} \frac{1}{\phi(|x|)}+\sum_{\substack{y\neq 0\\ p \mathlarger{\nmid} |x|}} \frac{1}{\phi(p|x|)}+ \sum_{\substack{y\neq 0\\ p \mathlarger{\mid} |x|}} \frac{1}{\phi(|x|)}\\ &=2|C(G)|+(p-2)\sum_{\substack{x\in G \\ p \mathlarger{\mid} |x|}} \frac{1}{\phi(|x|)}.
    \end{aligned}
\end{equation*}
 When $p=2$ or $p\nmid |G|$, we have $|C(G\times \mathbb{Z}_{p})|=2|C(G)|$. 
\end{proof}

\section{Proof of main results}
In this section we prove our main results.\\

\noindent \textbf{Proof of Theorem \ref{mainresult}.}
\begin{proof}
Let $G$ be any $2$-group with $|C(G)|=\frac{|G|}{2}$.  
Then for any prime $p$, it follows from  Theorem \ref{|G|/p},
$$|C(G\times\mathbb{Z}_{p})|=2|C(G)|=|G|=\frac{|G\times\mathbb{Z}_{p}|}{p}.$$ 
 Using GAP \cite{GAP}, $M_{16}$ is a solution for $|C(G)|=\frac{|G|}{2}$. Hence, it follows that $M_{16}\times (\mathbb Z_{2})^n$ are also solutions for $|C(G)|=\frac{|G|}{2}$ for any $n\in \mathbb N$. Now,  for odd prime $p$, $M_{16}\times (\mathbb Z_{2})^n\times \mathbb Z_p$ are solutions for $|C(G)|=\frac{|G|}{p}$. 
Now, for any prime $q\neq p$, $M_{16}\times (\mathbb Z_{2})^n\times \mathbb Z_q \times \mathbb{Z}_{p^{q-1}}$ are solutions for $|C(G)|=\frac{|G|}{p^{q-1}}$ as $|C(M_{16}\times (\mathbb Z_{2})^n\times \mathbb Z_q \times \mathbb{Z}_{p^{q-1}})|=|C(M_{16}\times (\mathbb Z_{2})^n)|\cdot |C(\mathbb{Z}_q)|\cdot |C(\mathbb{Z}_{p^{q-1}})|$.
%= \frac{16\times 2^{n}}{2}\cdot 2 \cdot ((q-1)+1)=\frac{|M_{16}\times (\mathbb Z_{2})^n\times \mathbb Z_q \times \mathbb{Z}_{p^{q-1}}|}{p^{q-1}}.$ 

% first equality follows from the fact that $|C(G\times H)|=|C(G)|\cdot |C(H)|$ when $(|G|,|H|)=1$ or $H\cong \mathbb{Z}_{2}$, second equality holds from the fact $|C(\mathbb{Z}_{n})|=$ number of divisors of $n$.

Taking $G$ a group with $\exp{(G)}=3$, it is easy to check that $|C(G)|=\frac{|G|+1}{2}$. 
% by Theorem \ref{mainresult_5}. 
Consider the group $\widetilde{G}=G\times \mathbb{Z}_{2}^{k+1}$ for  $k\geq 0$. Using Theorem \ref{|G|/p} inductively, we get $$|C(\widetilde{G})|=2^{k+1}|C(G)|=2^{k}|G|+2^{k}=\frac{|\widetilde{G}|}{2}+2^{k}.$$
\end{proof}
%\noindent  \textbf{Proof of Theorem \ref{mainresult_1}.}
%\begin{proof}
%Let $\mathcal{O}_{2}(G)=|G|-r$. There are $r-1$ elements in $G$ of order greater than $2$.
%Now, we consider the even partitions of $r-1$, where each such partition represents the number of generators of the cyclic subgroups of order greater than $2$. The maximum value of $|C(G)|$ corresponds to the partition $\underbrace{{(2,2,\cdots,2)}}_{\frac{r-1}{2} \text{times}}$, and in this case $|C(G)|=|G|-\frac{r-1}{2}$. The minimum value of $|C(G)|$ corresponds to the partition $(r-1)$, in this case $|C(G)|=|G|-{(r-2)}$. Thus, we have
%$|G|-(r-2) \leq |C(G)| \leq |G|- \frac{r-1}{2}$.  
%\end{proof}

\subsection{Groups with \texorpdfstring{$|C(G)|=\frac{|G|}{2}$}{|C(G)|=|G|/2} and \texorpdfstring{$\mathcal{O}_2(G)=1$}{O2(G)=1}}\label{o2(G)=1, |G|/2}

 Let $G$ be a group of order $2n$ with $|C(G)| = \frac{|G|}{2} = n$ and $\mathcal{O}_2(G)=1$. For $n = 1, 2, 3$, it can be easily checked that $|C(G)| \neq \frac{|G|}{2}$. So,  assume that $n\geq 4$. 
 Let $\{x_i\}_{i=1}^{2n}$ be the elements of $G$ such that $x_1 = e$ and $x_2^2 = e$. 
Taking $r=1, k=n$, by our key step given in Section (\ref{keystep}), we have $s=2$. So we are left with two elements say $x_\alpha , x_\beta \in \{x_{n+1}, \cdots, x_{2n}\}$ which are not assigned to any class yet. Let, $x_\alpha, x_\beta\in cl(x_i)$ for some $i\in \{3,\cdots,n\}$.  In this case $|cl(x_i)|=4$. Hence,   $G$ must satisfy the following properties:

$(A)$ There is a unique cyclic subgroup (say $H_0$) of order $2$
and a unique cyclic subgroup (say $N$) of order $m$ such that $\phi(m) = 4$. Thus $m\in \{5,8,10,12\}$. 
By Lemma \ref{normal}, $N\trianglelefteq G$.

$(B)$ For $1\leq i \leq {n-3}$, there are $n-3$ cyclic subgroups, say $H_i$, of order $m_i$ such that $\phi(m_i) = 2$. Thus $m_i\in \{3,4,6\}$. 

Let's call conditions $(A)$ and $(B)$ collectively as property $(P)$.
\begin{lemma}\label{lemma1}
   There is no group $G$ satisfying $(P)$ and one of the following conditions.
   \begin{enumerate}
       \item  $N \cong \mathbb{Z}_{10}$.

       \item $N \cong \mathbb{Z}_{5}$. 

        \item $N \cong \mathbb{Z}_{8}$ and $H_i \cong \mathbb{Z}_{3}$ for some $i\in \{1,\cdots,n-3\}$.

        \item $N \cong \mathbb{Z}_8$ and $H_i \cong \mathbb{Z}_{6}$ for some $i\in \{1,\cdots,n-3\}$.

   \end{enumerate}
\end{lemma}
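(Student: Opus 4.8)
The plan is to read off two structural facts from property $(P)$ together with the hypothesis $\mathcal{O}_2(G)=1$, and then to dispose of each of the four cases by producing a cyclic subgroup whose order is incompatible with the rigid class-size pattern forced by $(P)$. So the whole argument is organized around a single idea: under $(P)$ the list of cyclic subgroups of $G$ is completely pinned down, and any construction that yields a ``new'' cyclic subgroup is automatically a contradiction.

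First I would record two observations valid for every $G$ satisfying $(P)$ with $\mathcal{O}_2(G)=1$. \textbf{(i)} Since $t$ is the unique involution, $gtg^{-1}$ is again an involution for every $g\in G$, hence $gtg^{-1}=t$; thus $t$ is central. \textbf{(ii)} The equivalence classes $cl(x)$ realizing the $n$ cyclic subgroups have sizes $1$ (for $\{e\}$), $1$ (for $H_0$), $4$ (for $N$), and $2$ (for each $H_i$). As $|cl(x)|=\phi(|x|)$, this forces $\phi(|x|)\in\{1,2,4\}$ for every $x\in G$; in particular $G$ contains no element whose order $m$ satisfies $\phi(m)>4$, e.g.\ no element of order $24$.

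For parts $(1)$ and $(2)$ I would lean on observation (ii) and, crucially, the uniqueness clause of $(A)$. If $N\cong\mathbb{Z}_{10}$, then $N$ contains its own subgroup of order $5$, which is cyclic with $\phi(5)=4$; this is a second cyclic subgroup with $\phi=4$ distinct from $N$, contradicting uniqueness in $(A)$. If $N\cong\mathbb{Z}_5$, I would instead manufacture a $\mathbb{Z}_{10}$: the central involution $t$ satisfies $\langle t\rangle\cap N=\{e\}$, so $N\langle t\rangle=N\times\langle t\rangle\cong\mathbb{Z}_{10}$ is cyclic with $\phi(10)=4$ and is different from $N$, again contradicting $(A)$. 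For parts $(3)$ and $(4)$ the mechanism is instead order-coprimality, using that $\mathrm{Aut}(\mathbb{Z}_8)\cong\mathbb{Z}_2\times\mathbb{Z}_2$ is a $2$-group. In case $(3)$ take generators $a$ of $N\cong\mathbb{Z}_8$ and $b$ of $H_i\cong\mathbb{Z}_3$; case $(4)$ reduces to this one, since a $\mathbb{Z}_6$ subgroup supplies an element $b$ of order $3$. Because $N\trianglelefteq G$, conjugation gives a homomorphism $\langle b\rangle\to\mathrm{Aut}(N)$ whose image has order dividing $\gcd(3,4)=1$, so $b$ centralizes $N$. Then $ab=ba$ with $|a|=8$ and $|b|=3$ coprime, whence $\langle a,b\rangle\cong\mathbb{Z}_8\times\mathbb{Z}_3\cong\mathbb{Z}_{24}$ and $ab$ has order $24$; this violates observation (ii) since $\phi(24)=8>4$.

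I do not expect a serious obstacle in the case analysis itself, as each case collapses to a one-line contradiction once the right subgroup is exhibited. The load-bearing preliminary work is observation (ii)---correctly bookkeeping that $(P)$ permits exactly the class sizes $1,1,4,2,\dots,2$, so that ``$\phi(|x|)\le 4$'' becomes a usable forbidden-order criterion---and the centrality of $t$, which is what lets me build $\mathbb{Z}_{10}$ in part $(2)$. The subtlest point to state carefully is that the uniqueness in $(A)$ is genuinely needed for $(1)$ and $(2)$ (a second $\phi=4$ subgroup is not by itself an order violation), whereas $(3)$ and $(4)$ are ruled out purely by the order bound of (ii); keeping these two distinct contradictions straight is the only place where care is required.
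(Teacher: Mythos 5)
Your proposal is correct and follows essentially the same route as the paper: part (1) via the $\mathbb{Z}_5<\mathbb{Z}_{10}$ uniqueness violation, part (2) by forming the product of $N$ with the unique involution to get $\mathbb{Z}_{10}$, part (3) by noting that $\mathrm{Aut}(\mathbb{Z}_8)$ has order coprime to $3$ so $NH_i\cong\mathbb{Z}_{24}$, and part (4) by reduction to part (3). Your explicit bookkeeping of the class sizes ($\phi(|x|)\in\{1,2,4\}$) and the centrality of the involution merely make precise what the paper leaves implicit.
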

\begin{proof}
\begin{enumerate}
    \item If $\mathbb{Z}_{10} \leq G$, then $\mathbb{Z}_5<\mathbb{Z}_{10}\leq G$, which contradicts $(P)$.
    
    \item If $N\cong \mathbb{Z}_{5}$, then $NH_{0}$ is a subgroup of $G$ isomorphic to $\mathbb{Z}_{10}$, a contradiction to $(P)$.

     \item  Let $H_{i}\cong \mathbb{Z}_{3}$ for some $i \in \{1,\cdots, n-3\}$ then $NH_{i}\cong \mathbb{Z}_{8}\rtimes \mathbb{Z}_{3} \cong \mathbb{Z}_{24}$, a contradiction.
    \item Suppose $H_{i}\cong \mathbb{Z}_{6}$ for some $i \in \{1,\cdots,n-3\} $. Then there is a $k \in \{1,\cdots,n-3\}$, such that $H_k < H_i$ and $H_{k}  \cong \mathbb{Z}_{3}$, which contradicts (3).
\end{enumerate}
\end{proof}

\begin{lemma}\label{Lemma2}
  If $G$ satisfies $(P)$, then the following is true.
    \begin{enumerate}[(i)]
    \item
    If $N\cong \mathbb{Z}_{8}$, then $|G|=2^{k}$ for $k\in\{3,4\}$ and $G \cong \mathbb{Z}_{8} $ or 
    $Q_{16}$.

    \item If $N\cong \mathbb{Z}_{12}$, then $|G|=2^{a}3$ for $a\in \{2,3\}$ and $G\cong \mathbb{Z}_{12}$ or $ \mathbb{Z}_{3}\rtimes Q_8$.
    \end{enumerate}

\end{lemma}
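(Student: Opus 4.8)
The plan is to combine property $(P)$ with Lemma \ref{lemma1} to first pin down the possible element orders, then to identify the Sylow subgroups, and finally to exploit the uniqueness of the order-$m$ subgroup $N$ guaranteed by $(P)$.

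For part (i) assume $N \cong \mathbb{Z}_8$. By Lemma \ref{lemma1}(3) and (4) no $H_i$ is isomorphic to $\mathbb{Z}_3$ or $\mathbb{Z}_6$, and since $\phi(m_i)=2$ forces $m_i \in \{3,4,6\}$, every $H_i \cong \mathbb{Z}_4$. Thus the cyclic subgroups of $G$ are exactly $\{e\}$, $H_0 \cong \mathbb{Z}_2$, $N \cong \mathbb{Z}_8$ and several copies of $\mathbb{Z}_4$, so every element of $G$ has order dividing $8$. By Cauchy's theorem $|G|$ is a power of $2$, and since $\mathcal{O}_2(G)=1$ the group $G$ has a unique involution; by the classical classification of finite $2$-groups with a single subgroup of order $2$, $G$ is either cyclic or generalized quaternion. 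As $N \cong \mathbb{Z}_8$ is present while $(P)$ admits no cyclic subgroup of order exceeding $8$, we have $\exp(G)=8$, which forces $G \cong \mathbb{Z}_8$ in the cyclic case and $G \cong Q_{16}$ in the quaternion case, i.e.\ $|G|=2^k$ with $k \in \{3,4\}$.

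For part (ii) assume $N \cong \mathbb{Z}_{12}$; by the subgroup structure imposed by $(P)$ every element order lies in $\{1,2,3,4,6,12\}$, so by Cauchy $|G|=2^a3^b$. The Sylow $2$-subgroup contains the unique involution of $G$ and no element of order $8$, hence is a $2$-group with a single subgroup of order $2$ and exponent at most $4$, forcing it to be $\mathbb{Z}_4$ or $Q_8$; thus $a \in \{2,3\}$. The crucial step is to prove $b=1$. Write $L \cong \mathbb{Z}_4$ for the Sylow $2$-subgroup of $N$, which is characteristic in $N$ and hence normal in $G$. Any subgroup $T \cong \mathbb{Z}_3$ of $G$ normalizes $L$ and, being a $3$-group, acts trivially on it (as $\mathrm{Aut}(\mathbb{Z}_4)=\mathbb{Z}_2$); therefore $L$ and $T$ commute and $LT \cong \mathbb{Z}_4 \times \mathbb{Z}_3 \cong \mathbb{Z}_{12}$. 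By the uniqueness of $N$ in $(P)$ this forces $LT=N$, so $T$ is the unique subgroup of order $3$ of $G$. Since the Sylow $3$-subgroup has exponent $3$ (no element of order $9$) and a unique subgroup of order $3$, it must be $\mathbb{Z}_3$, giving $b=1$.

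It then remains to classify the two resulting orders. If $a=2$ then $\mathbb{Z}_4$ is a normal Sylow $2$-subgroup and, by Schur--Zassenhaus, $G = \mathbb{Z}_4 \rtimes \mathbb{Z}_3$ with trivial action (a $3$-group maps trivially into $\mathrm{Aut}(\mathbb{Z}_4)=\mathbb{Z}_2$), so $G \cong \mathbb{Z}_{12}$. If $a=3$ then the unique $\mathbb{Z}_3$ is a normal Sylow $3$-subgroup and $G = \mathbb{Z}_3 \rtimes Q_8$; the action must be nontrivial, since a trivial action would make $\mathbb{Z}_3$ central and the three copies of $\mathbb{Z}_4$ inside $Q_8$ would then yield three distinct subgroups isomorphic to $\mathbb{Z}_{12}$, contradicting the uniqueness of $N$, and up to isomorphism there is a single nontrivial such product $\mathbb{Z}_3 \rtimes Q_8$. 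I expect the main obstacle to be part (ii) with $a=3$: the delicate point is to wield the uniqueness of the order-$12$ subgroup both to force the Sylow $3$-subgroup down to $\mathbb{Z}_3$ and to exclude the direct product $\mathbb{Z}_3 \times Q_8$, and one must verify carefully that distinct order-$3$ subgroups (respectively distinct $\mathbb{Z}_4$'s centralizing a central $\mathbb{Z}_3$) really do produce distinct cyclic subgroups of order $12$.
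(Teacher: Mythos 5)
Your proof is correct, and it takes a genuinely different route from the paper's in the decisive steps. The paper's proof shares your opening moves --- it invokes Lemma \ref{lemma1} to conclude $|G|=2^k$ in case (i) and shows the $\mathbb{Z}_3$ (hence $\mathbb{Z}_6$) subgroup is unique and sits inside $N$ in case (ii), ruling out $b\geq 2$ via a subgroup of order $9$ --- but then it finishes by machine computation: it bounds the order by checking in GAP that every group of order $32$ (resp.\ $2^5\cdot 3$) violates $(P)$, and it identifies the surviving groups by a GAP search over the remaining small orders. You instead observe that $(P)$ forces a unique involution and a bounded exponent, so the Sylow $2$-subgroup (or $G$ itself in case (i)) is cyclic or generalized quaternion by the classical structure theorem, which immediately pins down $G\cong\mathbb{Z}_8$ or $Q_{16}$ in (i) and gives $a\in\{2,3\}$ with Sylow $2$-subgroup $\mathbb{Z}_4$ or $Q_8$ in (ii); Schur--Zassenhaus plus the triviality of $3$-group actions on $\mathbb{Z}_4$ and the uniqueness of $N$ then settle (ii). What your approach buys is a completely computer-free and self-contained argument, and it also derives the bound $a\in\{2,3\}$ stated in the lemma directly, whereas the paper's proof only establishes $a\leq 4$ conceptually and relies on GAP to eliminate $a=1,4$. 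The only points worth making fully explicit are the trivial ones you leave implicit: $a\geq 2$ because $\mathbb{Z}_4\leq N\leq G$, and the standard facts that an odd-order $p$-group with a unique subgroup of order $p$ is cyclic and that the three nontrivial homomorphisms $Q_8\to\mathrm{Aut}(\mathbb{Z}_3)$ yield isomorphic semidirect products (being permuted by $\mathrm{Aut}(Q_8)$).
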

\begin{proof}
(i)    If $N\cong \mathbb{Z}_{8}$, then by Lemma \ref{lemma1} and property $(P)$, $|G|=2^{k}$ for some $k$. Assume that $k\geq 5$ then $G$ has a subgroup $H$ of order $32$. By GAP \cite{GAP},  $H$ has more than one element of order $2$, $H\cong \mathbb{Z}_{32}$ or $H\cong Q_{32}$. As $\mathbb{Z}_{32}$ and $Q_{32}$ contradict $(P)$, there is no such group $G$ of order $2^{k}$ for $k\geq 5$. So $k\in\{3,4\}$. Now using GAP \cite{GAP} we see that the only groups that satisfy $(P)$ are $\mathbb{Z}_{8}$ and $Q_{16}.$\\

(ii) First, we show that if
$H_i\cong \mathbb{Z}_3$ or $H_i\cong \mathbb{Z}_6$,
then $H_i<N$. Suppose $H_i\cong \mathbb{Z}_3$ and $H_i\cap N = \{e\}$, then $NH_i\cong \mathbb{Z}_{12}\rtimes \mathbb{Z}_3 \cong \mathbb{Z}_{36}$, a contradiction. 
    Suppose $H_i\cong \mathbb{Z}_6$, then there is a  $H_{k}\cong \mathbb{Z}_{3}$ such that $H_{k} < H_{i}$. So $H_{k} <N$.
  The result follows since $H_i\cap N$ also contains the unique element $x_2$ of order $2$. Hence, the subgroups of $G$ isomorphic to $\mathbb Z_3$ or $\mathbb Z_6$ are unique.

In this case, we have
$|G|=2^{a}3^b$. We claim that $b=1$, suppose not; say $b\geq 2$, then $G$ admits a subgroup $H$ of order $9$, then $H\cong \mathbb{Z}_{3}\times \mathbb{Z}_{3}$ or $H\cong \mathbb{Z}_{9}$. The former is not possible because there is a unique cyclic subgroup of order $3$ in $G$, and the latter is not possible because it contradicts $(P)$.
  Now, if $a\geq 5$, then $G$ has a subgroup $H$ of order $32$, which contradicts $(P)$ by the same argument used in Lemma \ref{Lemma2} $(i)$. For $1 \leq a\leq 4$, using GAP \cite{GAP}, we have either $G\cong \mathbb{Z}_{12}$ or $\mathbb{Z}_{3}\rtimes Q_8$.
  
\end{proof}

\subsection{Groups with \texorpdfstring{$|C(G)|=\frac{|G|}{2}$}{|C(G)|=|G|/2} and \texorpdfstring{$\mathcal{O}_2(G)=3$}{O2(G)=3}} \label{o2(G)=3, |G|/2} 
% Now, as $G$ is an even group so, $\mathcal{O}_{2}(G)$ cannot be even, and thus we will move to the case when $\mathcal{O}_{2}(G)=3$. 
For $n \leq 6$, it can be easily checked $|C(G)| \neq \frac{|G|}{2}$ or $\mathcal{O}_{2}(G)\neq 3$ using GAP \cite{GAP}. So, let us assume that $n\geq 7$, and $\{x_i\}_{i=1}^{2n}$ be the elements of $G$ such that $x_1 = e$ and $x_2^2=x_{3}^2=x_{4}^2 = e$. 
Taking $r=3, k=n$, by our key step given in Section \ref{keystep} we have $s=4$. So we are left with four elements, say $x_\alpha , x_\beta,x_{\gamma},x_{\delta} \in \{x_{n+1}, \cdots, x_{2n}\}$, which have not yet been assigned to any class. Thus, we have two cases.\\  

 \textbf{Case 1: $x_{\alpha},x_{\beta},x_{\gamma},x_{\delta} \in cl(x_{i})$} for some $i\in\{5,\cdots,n\}$.

 In this case, $G$ must satisfy the following $3$ conditions.

$(A_1)$ There are three cyclic subgroups of order two and a unique cyclic subgroup (say $N$) of order $m$ such that $\phi(m) = 6 \implies m\in \{7,9,14,18\}$. By Lemma \ref{normal}, $N\trianglelefteq G$.

$(B_1)$ There are $n-5$ cyclic subgroups, say $H_1,\cdots,H_{n-5}$ with orders $m_1,\cdots,m_{n-5}$ such that $\phi(m_i) = 2\implies m_i\in \{3,4,6\} ~\forall ~i\in \{1,\cdots,n-5\}$.

 Let's call conditions $(A_1)$ and $(B_1)$ collectively as property $(P_1)$.
\\

\textbf{Case 2:} $x_{\alpha},x_{\beta}\in cl(x_i)$ and $ x_{\gamma},x_{\delta} \in cl(x_{j})$ for some $i,j\in\{5,\cdots,n\}$ such that $i\neq j$.

  In this case, $G$ must satisfy the following $3$ conditions.

$(A_2)$ There are exactly three cyclic subgroups of order 2,
and exactly two cyclic subgroups (say $K_1$ and $K_2$) of order $m_1$ and $m_2$ respectively, 
such that $\phi(m_i) = 4 \implies m_i\in \{5,8,10,12\}$, for $i=1,2$.

$(B_2)$ There are $n-6$ cyclic subgroups, say $H_1,\cdots,H_{n-6}$ with orders $m_1,\cdots,m_{n-6}$ 
such that $\phi(m_i) = 2\implies m_i\in \{3,4,6\} $ $\forall i\in \{1,\cdots,n-6\}$.

 Let's call conditions $(A_2)$ and $(B_2)$ collectively as property $(P_2)$.

 \begin{lemma}\label{lemma4}
    A group $G$ satisfying $(P_1)$ cannot have a subgroup isomorphic to $\mathbb{Z}_{18},\mathbb{Z}_{14},\mathbb{Z}_{9}$ or $\mathbb{Z}_{7}$.
\end{lemma}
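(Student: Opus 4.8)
The plan is to exploit the two rigid features built into property $(P_1)$: the group $G$ has \emph{exactly three} involutions, and it has a \emph{unique} cyclic subgroup $N$ whose order $m$ satisfies $\phi(m)=6$, this $N$ being normal by Lemma \ref{normal}. The first observation is that each of $\mathbb{Z}_{7},\mathbb{Z}_{9},\mathbb{Z}_{14},\mathbb{Z}_{18}$ has Euler totient equal to $6$, so any copy of one of them inside $G$ would have to be exactly the distinguished subgroup $N$. I would therefore split the four cases into two families according to the argument used.

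For $\mathbb{Z}_{14}$ and $\mathbb{Z}_{18}$ the contradiction is internal to the subgroup itself. A copy of $\mathbb{Z}_{14}$ contains its order-$7$ subgroup $\mathbb{Z}_{7}$, and $\phi(7)=\phi(14)=6$; likewise a copy of $\mathbb{Z}_{18}$ contains $\mathbb{Z}_{9}$ with $\phi(9)=\phi(18)=6$. In each case $G$ would possess two distinct cyclic subgroups of totient $6$, contradicting the uniqueness of $N$ in $(A_1)$. This disposes of these two cases at once.

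The substantive part, which I expect to be the main obstacle, is ruling out $N\cong\mathbb{Z}_{7}$ and $N\cong\mathbb{Z}_{9}$, since here $N$ has no proper subgroup of totient $6$ to exploit. Here I would analyze the conjugation action of an involution on the normal subgroup $N$. As $N$ has odd order, any involution $t$ satisfies $\langle t\rangle\cap N=\{e\}$, so $\langle t,N\rangle$ has order $2|N|$. Now $\mathrm{Aut}(\mathbb{Z}_{7})\cong\mathrm{Aut}(\mathbb{Z}_{9})\cong\mathbb{Z}_{6}$ is cyclic of order $6$ and hence has a single involution, namely inversion, so $t$ acts on $N$ either trivially or by $x\mapsto x^{-1}$. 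If $t$ centralizes $N$, then $\langle t,N\rangle\cong\mathbb{Z}_{2}\times N$ is cyclic of order $14$ or $18$, returning us to the cases already excluded and again violating the uniqueness of $N$. If instead $t$ inverts $N$, then $\langle t,N\rangle$ is the dihedral group of order $2|N|$, all of whose $|N|$ elements outside $N$ are involutions; since $|N|\in\{7,9\}$ this yields at least seven involutions in $G$, contradicting $\mathcal{O}_{2}(G)=3$. Either branch gives a contradiction, completing the proof.

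The crux is thus the ``centralize or invert'' dichotomy forced by $\mathrm{Aut}(N)$ having a unique involution; once this is in place, both outcomes collide with one of the two constraints of $(P_1)$ (uniqueness of $N$, or the exact involution count). The only routine point requiring a remark is that an involution genuinely exists and lies outside $N$, which is automatic because $\mathcal{O}_{2}(G)=3\neq 0$ and $N$ has odd order in the two hard cases.
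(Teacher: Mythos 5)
Your proposal is correct and follows essentially the same route as the paper: the $\mathbb{Z}_{14}$ and $\mathbb{Z}_{18}$ cases are killed by the uniqueness of the totient-$6$ cyclic subgroup, and the $\mathbb{Z}_{7}$ and $\mathbb{Z}_{9}$ cases by forming $N\langle t\rangle$ with an involution $t$ and observing that both possible extensions (cyclic or dihedral) violate $(P_1)$. The only difference is that you justify the ``cyclic or dihedral'' dichotomy explicitly via $\mathrm{Aut}(N)$ having a unique involution, whereas the paper simply asserts $NH_0\cong \mathbb{Z}_{2|N|}$ or $D_{2|N|}$.
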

\begin{proof}
     If $\mathbb{Z}_{14} \leq G$ or $\mathbb{Z}_{18} \leq G$, then $\mathbb{Z}_7<\mathbb{Z}_{14}\leq G$ or $\mathbb{Z}_9<\mathbb{Z}_{18}\leq G$ respectively. Both  contradict $(P_1)$. If $N\cong  \mathbb{Z}_{9}$, then for  a cyclic subgroup of order $2$ (say $H_0$), $NH_{0}\cong \mathbb{Z}_{9}\rtimes \mathbb{Z}_{2}\cong D_{18}$ or $\mathbb{Z}_{18}$, contradict $(P_1)$. If $N \cong \mathbb{Z}_{7}$, then $NH_{0}$ is either $D_{14}$ or $\mathbb{Z}_{14}$,  contradict $(P_{1})$.
     
    \end{proof}
    
\begin{lemma}\label{lemma5}
   There is no group $G$ satisfying $(P_2)$ and one of the following conditions.
   \begin{enumerate}
\item $K_1$ or $K_2 \cong \mathbb Z_{10}$.

\item $K_1$ or $K_2 \cong \mathbb Z_{5}$.

        \item $K_1 \cong \mathbb{Z}_{8}$ and $K_2 \cong \mathbb{Z}_{12}$.

   \end{enumerate}
\end{lemma}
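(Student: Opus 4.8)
The plan is to derive a contradiction in each case by exploiting that, under $(P_2)$ together with $|C(G)|=\frac{|G|}{2}$, the decomposition $1+3+2+(n-6)=n=|C(G)|$ accounts for \emph{every} cyclic subgroup of $G$. Hence $G$ has no cyclic subgroup $C$ with $\phi(|C|)>4$ (so no element of order $7,9,16,24,25,\dots$), and the only cyclic subgroups with $\phi=4$ are $K_1$ and $K_2$. Throughout I will use Lemma \ref{normal} and the standard fact that for a prime $p$ the number of subgroups of order $p$ is congruent to $1 \pmod p$.

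I would first reduce (1) to (2): if $K_1\cong\mathbb{Z}_{10}$, its unique subgroup of order $5$ is a second cyclic subgroup with $\phi=4$, forcing $K_2\cong\mathbb{Z}_5$, which is case (2). For (2), assume $K_1\cong\mathbb{Z}_5$. Since $25\nmid|G|$ ($\mathbb{Z}_{25}$ is barred by $\phi(25)=20$, and $\mathbb{Z}_5\times\mathbb{Z}_5$ would contain six subgroups of order $5$), every Sylow $5$-subgroup is a $\mathbb{Z}_5$; as each has $\phi=4$ and at most two $\phi=4$ subgroups are allowed, while their number is $\equiv1\pmod5$, there is a unique $\mathbb{Z}_5$, which is normal by Lemma \ref{normal}. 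Each of the three involutions acts on this $\mathbb{Z}_5$ as an automorphism of order dividing $2$. If some involution $t$ inverts it, then $\langle\mathbb{Z}_5,t\rangle\cong D_{10}$ contains five involutions, contradicting $\mathcal{O}_2(G)=3$; if all three centralize it, the subgroups $\langle\mathbb{Z}_5,t_i\rangle\cong\mathbb{Z}_{10}$ are pairwise distinct (each has a different involution), so already two of them, together with $\mathbb{Z}_5$, give three distinct $\phi=4$ subgroups, a contradiction.

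For (3), the crucial point is that $K_1$ is the unique cyclic subgroup of order $8$ and $K_2$ the unique one of order $12$ (any other would be a third $\phi=4$ subgroup), so both are normal by Lemma \ref{normal}. The unique subgroup $T\cong\mathbb{Z}_3$ of the cyclic group $K_2$ is characteristic in $K_2$, hence $T\trianglelefteq G$. Let $g$ generate $K_1$ (so $|g|=8$); then $g$ normalizes $T$ and acts on it with order dividing $2$. If $g$ centralizes $T$, then $g$ commutes with a generator $t$ of $T$, so $gt$ has order $24$ and $\langle gt\rangle\cong\mathbb{Z}_{24}$ with $\phi(24)=8$, which is forbidden. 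If $g$ inverts $T$, a direct computation gives $(gt)^2=g^2$, so $gt$ again has order $8$ while $gt\notin K_1$ (as $t\notin K_1$); thus $\langle gt\rangle$ is a second cyclic subgroup of order $8$, contradicting the uniqueness of $K_1$. This closes case (3).

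I expect case (3) to be the main obstacle. Cases (1)--(2) terminate quickly because an inverting involution immediately produces a dihedral subgroup with too many involutions. In (3) there is no involution available for this role, so the contradiction must be extracted from the interaction between the order-$8$ generator of $K_1$ and the normal $\mathbb{Z}_3$ sitting inside $K_2$; the delicate part is to rule out \emph{both} the centralizing case (which fuses the $2$- and $3$-parts into a forbidden element of order $24$) and the inverting case (which covertly builds a second copy of $\mathbb{Z}_8$), using only the uniqueness of $K_1$ and the prohibition of $\phi$-values exceeding $4$.
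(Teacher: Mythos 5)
Your proof is correct, and its overall skeleton matches the paper's: in every case you manufacture a subgroup that $(P_2)$ forbids (a third cyclic subgroup with $\phi=4$, a $D_{10}$ with five involutions, or a $\mathbb{Z}_{24}$ with $\phi(24)=8$). Two of your sub-arguments, however, take genuinely different routes. In case (2), where both $K_1$ and $K_2$ could be copies of $\mathbb{Z}_5$, the paper keeps them as a conjugate pair and analyses the normalizer ($N(K_1)$ contains no involution and no $3$-element, hence is a $5$-group, forcing $|G|=10$); you instead kill this configuration at the outset with the Sylow-type count that the number of subgroups of order $5$ is $\equiv 1 \pmod 5$ and at most $2$, so there is a unique normal $\mathbb{Z}_5$, after which the involution action (inversion gives $D_{10}$, centralization gives too many $\mathbb{Z}_{10}$'s) finishes both cases (1) and (2) uniformly --- this is cleaner than the paper's normalizer computation. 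In case (3) the roles are reversed: the paper's argument is the shorter one, since it forms $K_1H_i$ with $K_1\cong\mathbb{Z}_8$ normal and observes that $\mathrm{Aut}(\mathbb{Z}_8)$ has order $4$, so the $\mathbb{Z}_3$ must act trivially and $K_1H_i\cong\mathbb{Z}_{24}$ immediately; you work from the other side, letting the order-$8$ generator act on the normal $\mathbb{Z}_3$ and handling an inverting subcase (via $(gt)^2=g^2$) that in fact cannot occur --- since $K_1$ and $T$ are both normal with trivial intersection they centralize each other --- so that subcase is harmless extra work. Both proofs are valid; yours trades the paper's normalizer bookkeeping for congruence counting, at the cost of a slightly longer case (3).
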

\begin{proof} 
$(1)$ If $K_1\cong \mathbb{Z}_{10}$, then $K_2\cong \mathbb{Z}_5$ such that $K_2 < K_1$. Taking an element, say $x_2$, of order $2$ such that $x_2\notin K_1$, we get a subgroup $K_2\langle x_2\rangle $ of order $10$, contradicting $(P_2)$.

    $(2)$ Suppose $K_1\cong \mathbb{Z}_5$. If $K_2\ncong \mathbb{Z}_5$, then $K_1\trianglelefteq G$. Thus, the group will contain either $D_{10}$ or $\mathbb{Z}_{10}$, a contradiction.

    Now assume, $K_2\cong \mathbb{Z}_5$. Then, using the previous argument, we conclude that $K_i$ are not normal in $G$, and they are conjugates of each other. 
    If any element of order $2$ lies inside $N(K_1)$, then as $K_1\trianglelefteq N(K_1)$, we will get a subgroup of order $10$,  contradicting $(P_2)$. Also note that $3 \nmid |N(K_1)|$ as we will have an element of order $15$, contradicting $(P_{2})$. Thus $N(K_1)$ must be a $5$-group. Therefore, $|N(K_1)| = 5^k$. 
    Now, if $k>1$, we will get a subgroup of order $25$ which would be either $\mathbb{Z}_5\times \mathbb{Z}_5$ or $\mathbb{Z}_{25}$, both are not possible. Hence, $|N(K_1)| = 5 \implies |G| = 2|N(K_1)| = 10$, and no such group of order $10$ satisfies $|C(G)| = \frac{|G|}{2}$.

% Suppose $K_1\cong \mathbb{Z}_5$. If $K_2\ncong \mathbb{Z}_5$ then the group will contain either $D_{10}$ or $\mathbb{Z}_{10}$ as $K_1\trianglelefteq G$ contradicting $(P_2)$. Hence $K_2\cong \mathbb{Z}_5$,  now in this case, if any one of $K_1$ or $K_2$ is normal in $G$, then previous argument will follow. So assume that both are not normal in $G$, which means that these two subgroups are conjugates of each other and moreover they completely exhaust the list of conjugates of $K_1$, thus index of normalizer of $K_1$ is then $|G:N(K_1)|=2$. If any element of order $2$ lies inside $N(K_1)$ then since $K_1\trianglelefteq N(K_1)$ we will get a subgroup of order $10$ which would be either $D_{10}$ or $Z_{10}$ contradicting $(P_2)$. Also notice that $3 \nmid |N(K_1)|$, because then we will have an element of order $15$, contradicting $(P_{2})$. Thus $N(K_1)$ must be a $5$-group. Therefore $|N(K_1)| = 5^k$. Now if $k>1$, we will get a subgroup of order $25$ which would be either $\mathbb{Z}_5\times \mathbb{Z}_5$ or $\mathbb{Z}_{25}$, the former is not possible as it contains more than two subgroups isomorphic to $\mathbb{Z}_5$ and the later is not possible as it contains an element of order $25$ contradicting $(P_2)$. Hence, $|N(K_1)| = 5 \implies |G| = 2|N(K_1)| = 10$ and no such group  $10$ which satisfies $C(G) = \frac{|G|}{2}$.

$(3)$ In this case, for some $i\in\{1,\cdots,n-6\}$,  we have $H_i< K_2\leq G$  such that $H_{i} \cong \mathbb{ Z}_3$. Clearly $K_1\trianglelefteq G$ and $K_1 \cap H_{i}= \{e\}$, so $K_1H_i\leq G$ with $K_1H_i\cong \mathbb{Z}_{8}\rtimes \mathbb{Z}_{3}\cong \mathbb{Z}_{24}$, a contradiction to $(P_2)$.
\end{proof} 

\begin{lemma}\label{lemma6}
   If $G$ satisfies $(P_2)$ and $K_{1}, K_{2} \cong \mathbb{Z}_{8}$, then $G$ is a $2$-group. Furthermore $G \cong \mathbb{Z}_8\times \mathbb{Z}_2, M_{16}, $ SmallGroup$[32,10]: $ $ Q_8 \rtimes \mathbb Z_4$, SmallGroup$[32,13]:$ $\mathbb Z_8 \rtimes \mathbb Z_4 $, SmallGroup$[32,14]:$ $\mathbb Z_8 \rtimes \mathbb Z_4$ or $Q_{16}\times \mathbb{Z}_2$.
\end{lemma}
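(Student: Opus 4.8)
The plan is to first pin down the possible element orders, then eliminate the prime $3$ to force $G$ to be a $2$-group, and finally classify the resulting $2$-groups by a structural reduction followed by a finite machine check.

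First I would record that, under $(P_2)$ with $K_1,K_2\cong\mathbb{Z}_8$, every cyclic subgroup of $G$ has order in $\{1,2,3,4,6,8\}$: the trivial subgroup, the three subgroups of order $2$, the two copies $K_1,K_2$ of $\mathbb{Z}_8$, and the $H_i$ whose orders lie in $\{3,4,6\}$. Hence every element of $G$ has order in this set, so by Cauchy's theorem the only primes dividing $|G|$ are $2$ and $3$; write $|G|=2^a3^b$.

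The key step is to show $b=0$. Since $\phi(m)=4$ forces $m\in\{5,8,10,12\}$ and $(A_2)$ says the only two cyclic subgroups with this property are the given copies of $\mathbb{Z}_8$, the set $\{K_1,K_2\}$ is \emph{precisely} the set of all cyclic subgroups of order $8$ in $G$. Conjugation preserves order and cyclicity, so $G$ permutes $\{K_1,K_2\}$, giving a homomorphism $G\to\mathrm{Sym}\{K_1,K_2\}\cong\mathbb{Z}_2$. If $b\geq 1$, then by Cauchy there is $x\in G$ of order $3$; such $x$ lies in the kernel and hence normalizes $K_1$, and since $3\nmid|\mathrm{Aut}(\mathbb{Z}_8)|=4$ it centralizes $K_1$. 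Thus $\langle x\rangle K_1\cong\mathbb{Z}_3\times\mathbb{Z}_8\cong\mathbb{Z}_{24}\leq G$, a cyclic subgroup of order $24$ with $\phi(24)=8$, contradicting $(P_2)$. Therefore $b=0$ and $G$ is a $2$-group, proving the first assertion.

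It then remains to classify $2$-groups of exponent $8$ with exactly three involutions and exactly two cyclic subgroups of order $8$. I would first show the three involutions generate a Klein four-group: two non-commuting involutions generate a dihedral $2$-group $D_{2^k}$ with $k\geq 3$, which has at least five involutions, a contradiction; hence the involutions pairwise commute and $\Omega_1(G)\cong\mathbb{Z}_2\times\mathbb{Z}_2$. To bound the order, note any $2$-group of order at least $64$ contains a subgroup $H$ of order $64$, and $H$ inherits exponent at most $8$, at most three involutions, and at most two cyclic subgroups of order $8$; I would verify by a GAP computation that no group of order $64$ satisfies all three constraints, so $|G|\leq 32$. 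A final GAP enumeration over the groups of order $16$ and $32$ satisfying $(P_2)$ with $K_1,K_2\cong\mathbb{Z}_8$ returns exactly $\mathbb{Z}_8\times\mathbb{Z}_2$, $M_{16}$, $Q_{16}\times\mathbb{Z}_2$, and SmallGroup$[32,10]$, SmallGroup$[32,13]$, SmallGroup$[32,14]$.

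The hard part will be the order bound: the conceptual content is the reduction to $\Omega_1(G)\cong\mathbb{Z}_2\times\mathbb{Z}_2$ together with the requirement of only two cyclic $\mathbb{Z}_8$'s, which prevents the group from growing, since a larger group would be forced to produce either extra involutions or extra copies of $\mathbb{Z}_8$. I would make this precise only far enough to localize the obstruction to the single order $64$ and then discharge it by the finite GAP verification, exactly in the spirit of Lemma \ref{Lemma2}, where the analysis is reduced to subgroups of a fixed order.
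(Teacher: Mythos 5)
Your reduction to a $2$-group is correct and is essentially the paper's argument in a cleaner form: the paper splits into the cases ``$K_1$ normal'' and ``$K_1,K_2$ conjugate, so $|G:N(K_1)|=2$'' and in each case manufactures the forbidden subgroup $K_1H_i\cong\mathbb{Z}_{24}$, while you package the same idea as the action of $G$ on the two-element set $\{K_1,K_2\}$ and the coprimality $3\nmid|\mathrm{Aut}(\mathbb{Z}_8)|$. Either route is fine.

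The gap is in your order bound. You propose to rule out $|G|\geq 64$ by passing to a subgroup $H$ of order $64$ and checking in GAP that no group of order $64$ has exponent at most $8$, at most three involutions, and at most two cyclic subgroups of order $8$. That check does not come out empty: $Q_8\times Q_8$ has order $64$, exponent $4$, exactly three involutions, and no cyclic subgroup of order $8$ at all, so it satisfies all three of your inherited constraints. Hence your verification would fail and the argument collapses at this step. The natural patch --- choosing $H$ to contain $K_1$, which forces $\exp(H)=8$ and at least one copy of $\mathbb{Z}_8$ in $H$ --- still leaves open the possibility of a group of order $64$ with exponent $8$, three involutions, and exactly \emph{one} cyclic subgroup of order $8$, which your constraints do not exclude and which would not be a solution of $(P_2)$ itself; you would have to verify separately that no such group exists. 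The paper avoids this by treating order $64$ as a direct search for solutions of $(P_2)$ (finding none) and then, for $|G|\geq 128$, passing to a subgroup of order $128$ and invoking the GAP-checked disjunction that every group of order $128$ has more than three involutions, or more than two copies of $\mathbb{Z}_8$, or contains $\mathbb{Z}_{16}$; note that this disjunction is genuinely false at order $64$ (again $Q_8\times Q_8$), which is presumably why the authors jump to $128$. You should either adopt that structure or strengthen and actually justify the list of constraints you intend to test at order $64$.
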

\begin{proof}
  Suppose one of the $K_{i}$ is normal (say $K_1$) in $G$. If $H_i\cong \mathbb{Z}_3$ exists, then $K_1 H_i$ is a subgroup of order $24$,  a contradiction. If $K_{i}$ are not normal, then they must be conjugates of each other. In that case, $|G:N(K_1)| = 2$, and so $N(K_1)\trianglelefteq G$. If $H_i\cong \mathbb{Z}_3$ exists, then it must lie inside $N(K_1)$; otherwise, $|H_i N(K_1)|> |G|$. But $K_1H_{i}\cong \mathbb Z_{24}$,  a contradiction. Hence, $G$ must be a $2$- group. 

 For $|G| = 16$ we have two solutions $\mathbb{Z}_8\times \mathbb{Z}_2$ and $M_{16}$. For $|G| = 32$ we have four solutions which are SmallGroup$[32,10]:$ $ Q_8 \rtimes \mathbb Z_4$, SmallGroup$[32,13]:$ $\mathbb Z_8 \rtimes \mathbb Z_4 $, SmallGroup$[32,14]:$ $\mathbb Z_8 \rtimes \mathbb Z_4$, and $Q_{16}\times \mathbb{Z}_2$.  For $|G| = 64$,  there is no solution by GAP \cite{GAP}. When $|G|\geq 128$, then $G$ admits a subgroup of order $128$, by GAP \cite{GAP} any group of order $128$ contains more than three elements of order $2$, more than two $\mathbb{Z}_8$ are sitting in it or $\mathbb{Z}_{16}$ is sitting in it, giving a contradiction in each case.
\end{proof}
\begin{lemma}\label{lemma7}
Suppose $G$ satisfies $(P_2)$ and $K_{1}, K_{2} \cong \mathbb{Z}_{12}$, then $G$ is a $\{2,3\}$-group. Furthermore $G \cong \mathbb Z_{12}\times \mathbb Z_{2},$ $\mathbb Z_4\times (\mathbb{Z}_{3}\rtimes \mathbb{Z}_{4})$, SmallGroup$[48,12]:$ $(\mathbb{Z}_{3} \rtimes \mathbb{Z}_{4})\rtimes \mathbb{Z}_4$, SmallGroup$[48,13]:$ $\mathbb{Z}_{12}\rtimes \mathbb{Z}_4$ and $\mathbb{Z}_2\times (\mathbb{Z}_{3}\rtimes Q_{8})$.
\end{lemma}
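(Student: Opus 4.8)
The plan is to mirror the structure of Lemma \ref{lemma6}: first pin down the prime divisors and the Sylow data of $G$, then convert the condition ``exactly two cyclic subgroups $\cong\mathbb{Z}_{12}$'' into an explicit bound on $|G|$, and finish with a finite GAP search. For the reduction to a $\{2,3\}$-group, note that under $(P_2)$ with $K_1,K_2\cong\mathbb{Z}_{12}$ the only cyclic subgroups carrying $\phi=4$ are $K_1,K_2$, so $G$ has no subgroup $\cong\mathbb{Z}_8$; likewise $\mathbb{Z}_9$ (which has $\phi=6$) cannot occur. Since every element of $G$ generates one of the listed cyclic subgroups, every element has order in $\{1,2,3,4,6,12\}$, whence $\exp(G)\mid 12$ and, by Cauchy, every prime divisor of $|G|$ lies in $\{2,3\}$. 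I would record for later that the Sylow $2$-subgroup has exponent dividing $4$ and that all three involutions of $G$ lie in it.

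Next I would show that the Sylow $3$-subgroup is normal of order $3$. To exclude $3^2\mid |G|$, note that a Sylow $3$-subgroup would then have exponent $3$ (no $\mathbb{Z}_9$) and hence contain a copy of $\mathbb{Z}_3\times\mathbb{Z}_3$; combining its four subgroups of order $3$ with a $\mathbb{Z}_4$ coming from some $K_i$ produces at least four distinct copies of $\mathbb{Z}_{12}$, contradicting $(P_2)$. This yields $|G|=2^a3$ with a Sylow $3$-subgroup of order $3$; writing $C$ for one of them, a short counting argument (each $\mathbb{Z}_{12}$ meets the Sylow $3$-subgroups in a single $\mathbb{Z}_3$, and conjugate copies of $C$ lie in equally many $\mathbb{Z}_{12}$'s, forcing $n_3\cdot t=2$ with $n_3\equiv 1\pmod 3$) gives $n_3=1$, so $C\trianglelefteq G$ is the unique subgroup of order $3$; alternatively one may quote Lemma \ref{normal}.

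With $C\trianglelefteq G$ of order $3$ I would bound $|G|$ as follows. Since $[G:C_G(C)]\mid|\mathrm{Aut}(\mathbb{Z}_3)|=2$ and $C$ is a central Sylow $3$-subgroup of $C_G(C)$, we have $C_G(C)=T\times C$ with $T$ its Sylow $2$-subgroup. Each $K_i$ equals $A_i\times C$ for some $A_i\cong\mathbb{Z}_4\le T$, and conversely every $\mathbb{Z}_4\le T$ gives a $\mathbb{Z}_{12}$; hence $T$ has exactly two cyclic subgroups of order $4$, i.e.\ exactly four elements of order $4$. As $\exp(T)\mid 4$ and $T$ contains at most the three involutions of $G$, counting elements of $T$ gives $|T|-5\le 3$, so $|T|=8$ and $T\cong\mathbb{Z}_4\times\mathbb{Z}_2$. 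Thus $|C_G(C)|=24$ and $|G|\in\{24,48\}$. Finally I would enumerate the groups of these two orders in GAP \cite{GAP}, keep those satisfying $|C(G)|=|G|/2$, $\mathcal{O}_2(G)=3$ and having two cyclic subgroups $\cong\mathbb{Z}_{12}$, and read off exactly $\mathbb{Z}_{12}\times\mathbb{Z}_2$, $\mathbb{Z}_4\times(\mathbb{Z}_3\rtimes\mathbb{Z}_4)$, SmallGroup$[48,13]$ and $\mathbb{Z}_2\times(\mathbb{Z}_3\rtimes Q_8)$.

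The crux, and the step I expect to resist a purely local treatment, is the order bound: the conditions $\exp(G)\mid 4$ on the Sylow $2$-subgroup and ``three involutions'' alone permit arbitrarily large $2$-groups (for instance iterated direct products with $Q_8$), so the binding constraint must be the \emph{global} one that there are only two copies of $\mathbb{Z}_{12}$. Channelling this through $C_G(C)$ and then squeezing $T$ with the three-involution count is what makes $|G|$ finite. The fiddliest sub-step is excluding $\mathbb{Z}_3\times\mathbb{Z}_3$: I would make the ``four $\mathbb{Z}_{12}$'s'' argument precise by first arranging a $\mathbb{Z}_4$ that normalises, indeed centralises, the relevant $\mathbb{Z}_3\times\mathbb{Z}_3$, falling back on a direct GAP check at the minimal admissible order should the uniform argument get unwieldy.
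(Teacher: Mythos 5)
Your route diverges from the paper's in a genuinely useful way at the order-bounding step, but it has one concrete gap: the exclusion of $3^2\mid |G|$. Your sketch combines the four order-$3$ subgroups of a putative $\mathbb{Z}_3\times\mathbb{Z}_3$ with ``a $\mathbb{Z}_4$ coming from some $K_i$'' to manufacture four copies of $\mathbb{Z}_{12}$, but an element of order $4$ in $K_i$ is only guaranteed to centralise the single $\mathbb{Z}_3$ inside $K_i$; it need not normalise, let alone centralise, the other three subgroups of order $3$, so the four $\mathbb{Z}_{12}$'s do not materialise. Your proposed fallback (``GAP at the minimal admissible order'') is not well defined either, because until $3^2$ is excluded there is no bound on the $3$-part of $|G|$. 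The clean repair is the paper's argument, which you should run \emph{before} any Sylow counting: if $H\cong\mathbb{Z}_3$ does not lie inside $K_1$, then either $K_1\trianglelefteq G$, or $K_2=K_1^g$ and $N(K_1)$ has index $2$ so contains $H$; in both cases $K_1H\cong\mathbb{Z}_{12}\rtimes\mathbb{Z}_3\cong\mathbb{Z}_{12}\times\mathbb{Z}_3$ (as $|\mathrm{Aut}(\mathbb{Z}_{12})|=4$), which contains four copies of $\mathbb{Z}_{12}$, contradicting $(P_2)$. Hence the $\mathbb{Z}_3$ is unique, which kills $\mathbb{Z}_3\times\mathbb{Z}_3$ and (with $\mathbb{Z}_9$ already excluded) gives $b=1$ at once; your $n_3\cdot t=2$ count then becomes redundant.

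Once that is patched, your argument is correct and in fact sharper than the paper's where it matters most. The paper handles $a\geq 5$ by observing that the normal $\mathbb{Z}_3$ forces a subgroup of order $2^5\cdot 3=96$ and then invoking a GAP sweep over all groups of that order; you instead write $C_G(C)=T\times C$ (Burnside normal $p$-complement, $C$ central Sylow), identify the $\mathbb{Z}_{12}$'s of $G$ bijectively with the $\mathbb{Z}_4$'s of $T$, and squeeze $|T|\leq 1+3+4=8$ from $\exp(T)\mid 4$, $\mathcal{O}_2(G)=3$ and ``exactly four elements of order $4$'', giving $T\cong\mathbb{Z}_4\times\mathbb{Z}_2$ and $|G|\in\{24,48\}$ outright. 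That replaces a computation over $231$ groups of order $96$ by a two-order check and is a genuine improvement. One caveat on the endgame: the paper's own proof of Lemma \ref{lemma7} (and Theorem \ref{mainresult_2}(2)) lists a fifth group, SmallGroup$[48,12]$, which the lemma's statement omits; your claimed GAP output matches the statement rather than the proof, so make sure your final enumeration at order $48$ is actually run rather than transcribed.
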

\begin{proof}
   As $K_{1}, K_{2} \cong \mathbb{Z}_{12}$ so $|G|= 2^a 3^b$. Now, if $H_i\leq G$ such that $H_i\cong \mathbb Z_3$, then we claim that $H_i$ must lie inside both $K_1$ and $K_2$ and so it is unique in $G$. On contrary lets assume that $H_i$ lies outside $K_1$ then if $K_1\trianglelefteq G$ we get a subgroup $K_1H_{i}\cong \mathbb{Z}_{12}\rtimes \mathbb{Z}_{3}\cong \mathbb{Z}_{12}\times \mathbb{Z}_{3}$, which contains more than $2$ copies of $\mathbb{Z}_{12}$, a contradiction to $(P_{2})$. If $K_2$ is a conjugate of $K_1$ then $|G:N(K_1)| = 2$ and $H_i$ should lie inside $N(K_1)$ otherwise $|H_i N(K_1)|>|G|$. Since $K_1\trianglelefteq N(K_1)$, we will again get a subgroup $K_1H_{i}\cong \mathbb{Z}_{12}\times \mathbb{Z}_{3}$, contradicting $(P_{2})$. Now we claim that $b=1$, because if $b>1$, then we get a subgroup of order $9$ which would be either $\mathbb{Z}_9$ or $\mathbb{Z}_3\times \mathbb{Z}_3$, the former contradicts $(P_2)$ and the later contradicts the fact that there is a unique $\mathbb{Z}_{3}$ in $G$. Hence, $|G| = 2^a \cdot3$. 
   
   Using GAP \cite{GAP}, for $|G| = 2^3\cdot 3$, $\mathbb Z_{12}\times \mathbb Z_{2}$ is the only solution. For $|G| = 2^4\cdot 3$ we have four solutions namely $\mathbb Z_4\times (\mathbb{Z}_{3} \rtimes \mathbb{Z}_{4})$, SmallGroup$[48,12]:$ $(\mathbb{Z}_{3} \rtimes \mathbb{Z}_{4})\rtimes \mathbb{Z}_4$, SmallGroup$[48,13]:$ $\mathbb{Z}_{12}\rtimes \mathbb{Z}_4$ and $\mathbb{Z}_2\times (\mathbb{Z}_{3}\rtimes Q_{8})$. Now for $a\geq 5$, as $H_i\trianglelefteq G$, so $G$ contains a subgroup of order $2^5\cdot3$  and all groups of order $2^5\cdot 3$ have either more than three elements of order $2$ or more than two $\mathbb{Z}_{12}$ are sitting or $\mathbb{Z}_8$ is sitting inside it contradicting $(P_2)$.
\end{proof}
   
\subsection{Groups with \texorpdfstring{$|C(G)|=\frac{|G|}{2}$}{|C(G)|=|G|/2} and \texorpdfstring{$\frac{|G|}{2}-3 \leq$} \texorpdfstring{$\mathcal{O}_2(G)$} $\leq \frac{|G|}{2}-1$}\label{2.4}

\subsubsection{Groups with \texorpdfstring{$|C(G)|=\frac{|G|}{2}$}{C(G)=|G|/2} and \texorpdfstring{$\mathcal{O}_2(G)=\frac{|G|}{2}-1$}{O2(G)=|G|/2-1}}\label{2.4.1}
 In this case, we have $\frac{|G|}{2}-1$ many cyclic subgroups of order $2$, and one of identity, hence $|C(G)|= |G| \implies |G| = 0$, a contradiction.

\subsubsection{Groups with \texorpdfstring{$|C(G)|=\frac{|G|}{2}$}{|G|/2} and \texorpdfstring{$\mathcal{O}_2(G)=\frac{|G|}{2}-2$}{|G|/2-2}}\label{2.4.2}
Suppose $|G|=2n$ and $\{x_i\}_{i=1}^{2n}$ are the elements of $G$ such that $x_1 = e$ and $x_i^2= e$ $\forall i \in \{2,3,\cdots,n-1\}$. Then the cyclic subgroup generated by $x_k$ where $k\in \{n,\cdots,2n\}$ must be the same, otherwise $|C(G)|>\frac{|G|}{2}$,
contradict our hypothesis.
Thus $|x_n| \geq n+2\implies |x_n| = 2n \implies G\cong \mathbb Z_{2n}$. As there is only one element of order $2$ in $\mathbb Z_{2n}$. Hence, $n-2 = 1\implies n=3\implies G\cong \mathbb{Z}_{6}$, which is not possible.

\subsubsection{Groups with $|C(G)|=\frac{|G|}{2}$ and $\mathcal{O}_2(G)=\frac{|G|}{2}-3$} \label{2.4.3}
Let $\{x_i\}_{i=1}^{2n}$ be the elements of $G$ such that $x_1 = e$ and $x_i^2= e$ $\forall i \in \{2,3,\cdots,n-2\}$.
Suppose $x_i, 1 \leq i \leq n$ are the generators of the elements of $C(G)$.
Using the fact that,  for each divisor $k$ of $|x_{n}|$ or $|x_{n-1}|$, there are  cyclic subgroups of  order $k$ sitting inside them, we conclude that
  $|x_{n}|$ cannot be the product of two or more distinct odd primes. Therefore, 
 we are left with the following possibilities of $|x_{n-1}|$ and $|x_n|$.
\begin{center}
\begin{tabular}{ |c|c|c|c|c| } 
\hline
 & (a) & (b) & (c) &(d)\\
\hline
$|x_{n}|$ & $2p$ & $p^{2}$  & $p$ & $4$ \\ 
$|x_{n-1}|$ & $p$ & $p$ & $q,p,4$ & $4,8$\\ 
\hline
\end{tabular}
\end{center}
where $p$ and $q$ are odd primes such that $p\neq q$.

If $\langle x_{n-1}\rangle \leq \langle x _{n} \rangle$, then $|x_n| \geq n+3\implies |x_n| = 2n \implies G\cong \mathbb Z_{2n}$. Hence, $n-3 = 1\implies n=4\implies G\cong \mathbb{Z}_{8}$. 
 Hence,  $\langle x_{n-1}\rangle \leq \langle x _{n} \rangle$ or $\langle x_{n}\rangle \leq \langle x _{n-1} \rangle$ holds if and only if $G \cong \mathbb{Z}_{8}$.
 
Case (a) and (b) are not possible because here $\langle x_{n-1}\rangle \leq \langle x _{n} \rangle$. So $G\cong \mathbb{Z}_{8}$ which leads to a contradiction as $p\nmid |G|$.

Case (c):
If $|x_{n}|=p$ and  $|x_{n-1}|=q$,  both are normal in $G$, and  $\langle x_{n} \rangle\langle x_{n-1} \rangle \cong\mathbb{Z}_{pq}$, a contradiction. 
If $|x_{n}|=p$ and  $|x_{n-1}|=p$, then $\phi(p) + \phi(p)=n+2\implies 2p-4=n \implies p\mid 4\implies p=2$, a contradiction. 
When $|x_{n}|=p$ and  $|x_{n-1}|=4$, then $\phi(p) + \phi(4)=n+2\implies p-1=n \implies p\nmid n$, a contradiction. 

Case (d): If $|x_{n}|=|x_{n-1}|=4$, then $\phi{(4)}+\phi{(4)}=n+2\implies n=2$. Thus $G\cong \mathbb{Z}_4$, which is not possible. Finally, if $|x_{n}|=4$ and $|x_{n-1}|=8$, then $\langle x_{n}\rangle \leq \langle x_{n-1}\rangle$ thus $G\cong \mathbb{Z}_{8}$, which is not possible.\\

\noindent \textbf{Proof of Theorem \ref{mainresult_2}.}
\begin{proof}
 (1)  Proof follows from Section \ref{o2(G)=1, |G|/2}.\\
 (2) Proof follows from Section \ref{o2(G)=3, |G|/2}.\\
Proof of (3) and (4) follows from Section \ref{2.4}.
\end{proof}

\subsection{Groups with \texorpdfstring{$|C(G)|=\frac{|G|}{2}-1$}{|C(G)|=|G|/2-1} and \texorpdfstring{$\mathcal{O}_2(G)=1$}{O2(G)=1}}\label{o2G=1, |G|/2-1} 
 For $n \leq 4$ it can be easily checked that $|C(G)| \neq \frac{|G|}{2}-1$ using GAP \cite{GAP}. So, let us assume that $n\geq 5$. Let $\{x_i\}_{i=1}^{2n}$ be the elements of $G$ such that $x_1 = e$ and $x_2^2 = e$. 
 Taking $r=1, k=n-1$,  by our key step given in Section (\ref{keystep}) we have $s=4$. So we are left with four elements say $x_\alpha , x_\beta,x_{\gamma},x_{\delta} \in \{x_{n}, \cdots, x_{2n}\}$ which are not assigned to any class yet. Thus, we have two cases.\\  
  \textbf{Case 1: $x_{\alpha},x_{\beta},x_{\gamma},x_{\delta} \in cl(x_{i})$} for some $i\in\{3,\cdots,n-1\}$.

In this case, $G$ must satisfy the following conditions.

$(A_3)$ There is a unique cyclic subgroup of order two, and there is a unique cyclic subgroup (say $N$) of order $m$ such that $\phi(m) = 6 \implies m\in \{7,9,14,18\}$. By Lemma \ref{normal}, $N\trianglelefteq G$.

$(B_3)$ There are $n-4$ cyclic subgroups say $H_1,\cdots,H_{n-4}$ with orders $m_1,\cdots,m_{n-4}$ such that $\phi(m_i) = 2\implies m_i\in \{3,4,6\} \forall i\in \{1,\cdots,n-4\}$.

In this case, using the proof of Lemma \ref{lemma4}, we can show that there are no groups.\\ 

 \textbf{Case 2:} $x_{\alpha},x_{\beta}\in cl(x_i)$ and $ x_{\gamma},x_{\delta} \in cl(x_{j})$ for some $i,j\in\{3,\cdots,n-1\}$ such that $i\neq j$.

  In this case, $G$ must satisfy the following  conditions.

$(A_4)$ There is a unique cyclic subgroup of order two.

$(B_4)$ There are exactly two cyclic subgroups (say $K_1$ and $K_2$) of order $m_1$ and $m_2$
such that $\phi(m_i) = 4 \implies m_i\in \{5,8,10,12\}$, for $i=1,2$.

$(C_4)$ There are $n-5$ cyclic subgroups say $H_1,\cdots,H_{n-5}$ with orders $m_1,\cdots,m_{n-5}$ 
such that $\phi(m_i) = 2\implies m_i\in \{3,4,6\} $ $\forall i\in \{1,\cdots,n-5\}$.

 Let's call conditions $(A_4)$, $(B_4)$ and $(C_4)$ collectively as property $(P_4)$.

 \begin{lemma}\label{lemma8}
     There is no group $G$ satisfying $(P_4)$ and one of the following conditions:
   \begin{enumerate}
\item $K_1\cong \mathbb{Z}_{10}$ and $K_2 \ncong \mathbb Z_{5}$.
%(10,8),(10,10),(10,12)
\item  $K_1 \cong \mathbb Z_{5}$ and  $K_2 \ncong \mathbb Z_{10}$.
%(5,5),(5,8),(5,12)

        \item $K_1,K_2\in \{\mathbb{Z}_{8},\mathbb{Z}_{12}\}$

   \end{enumerate}
 \end{lemma}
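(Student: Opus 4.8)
The plan is to mirror the case analysis of Lemmas~\ref{lemma5}, \ref{lemma6} and \ref{lemma7}, but to exploit the decisive extra hypothesis $\mathcal{O}_2(G)=1$: since $G$ has a unique involution, the classical theorem on $p$-groups with a unique subgroup of order $p$ forces every Sylow $2$-subgroup of $G$ to be cyclic or generalized quaternion. I would set $H_0=\langle x_2\rangle$ for the unique subgroup of order $2$, which is normal by Lemma~\ref{normal}, and repeatedly use that $K_1,K_2$ are the \emph{only} cyclic subgroups with $\phi=4$, so any cyclic subgroup of order $5,8,10$ or $12$ must coincide with $K_1$ or $K_2$.

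For (1), $K_1\cong\mathbb{Z}_{10}$ contains a subgroup $\cong\mathbb{Z}_5$; as $\phi(5)=4$ this subgroup is one of $K_1,K_2$, and since $K_1\cong\mathbb{Z}_{10}$ it must equal $K_2$, contradicting $K_2\not\cong\mathbb{Z}_5$. For (2) I would split on $K_2$. If $K_2\not\cong\mathbb{Z}_5$, then $K_1$ is the unique subgroup of order $5$, hence normal by Lemma~\ref{normal}; then $K_1H_0\cong\mathbb{Z}_5\times\mathbb{Z}_2\cong\mathbb{Z}_{10}$ is a cyclic subgroup with $\phi=4$, forcing $K_2\cong\mathbb{Z}_{10}$, a contradiction. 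If instead $K_2\cong\mathbb{Z}_5$, then $G$ has exactly two subgroups of order $5$ (an order-$25$ Sylow is excluded since $\mathbb{Z}_{25}$ has $\phi(25)=20$ and $\mathbb{Z}_5\times\mathbb{Z}_5$ has six subgroups of order $5$), so $n_5=2$, contradicting $n_5\equiv1\pmod 5$.

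For (3) I would treat the three configurations of $\{K_1,K_2\}\subseteq\{\mathbb{Z}_8,\mathbb{Z}_{12}\}$. When $K_1\cong\mathbb{Z}_8$ and $K_2\cong\mathbb{Z}_{12}$, the subgroup $\mathbb{Z}_8$ is unique, hence normal, and with the $\mathbb{Z}_3\leq K_2$ one gets $\mathbb{Z}_8\rtimes\mathbb{Z}_3\cong\mathbb{Z}_{24}$ (the action is trivial as $3\nmid|\mathrm{Aut}(\mathbb{Z}_8)|$); but an element of order $24$ has $\phi(24)=8>4$, impossible. When $K_1,K_2\cong\mathbb{Z}_8$, only the primes $2,3$ can divide $|G|$, and the normality/conjugacy argument of Lemma~\ref{lemma6} rules out the coexistence of a $\mathbb{Z}_3$ with $\mathbb{Z}_8$, so $G$ is a $2$-group; the unique-involution hypothesis then gives $G\cong\mathbb{Z}_{2^k}$ or $Q_{2^k}$, neither of which contains two distinct subgroups $\cong\mathbb{Z}_8$ (and $Q_{2^k}$ with $k\geq5$ already has an element of order $\geq16$).

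Finally, the main obstacle is the case $K_1,K_2\cong\mathbb{Z}_{12}$. Here I would first reproduce the argument of Lemma~\ref{lemma7} to show the subgroup $\mathbb{Z}_3$ is unique and $|G|=2^a\cdot3$. Since $\mathbb{Z}_{12}$ contains $\mathbb{Z}_4$ while an order-$8$ cyclic subgroup is forbidden (it would be a $\phi=4$ subgroup other than $K_1,K_2$), the Sylow $2$-subgroup---cyclic or generalized quaternion by the unique involution---must be $\mathbb{Z}_4$ or $Q_8$, so $|G|\in\{12,24\}$. A group of order $12$ cannot contain two distinct copies of $\mathbb{Z}_{12}$, and a direct check via GAP~\cite{GAP} of the order-$24$ groups with Sylow $2$-subgroup $Q_8$ and a unique involution shows none has exactly two cyclic subgroups of order $12$, completing the proof.
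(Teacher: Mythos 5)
Your proof is correct, and while it follows the same broad case division as the paper (which simply defers each case to the arguments of Lemmas~\ref{lemma5}, \ref{lemma6} and \ref{lemma7} and then invokes GAP for the surviving orders $16$, $24$ and $48$), your route is genuinely different in how it closes the cases: you exploit the hypothesis $\mathcal{O}_2(G)=1$ from $(A_4)$ structurally, via the classical fact that a $2$-group with a unique involution is cyclic or generalized quaternion. This immediately kills the $K_1,K_2\cong\mathbb{Z}_8$ case (neither $\mathbb{Z}_{2^k}$ nor $Q_{2^k}$ has two distinct cyclic subgroups of order $8$) and pins the $K_1,K_2\cong\mathbb{Z}_{12}$ case down to Sylow $2$-subgroup $\mathbb{Z}_4$ or $Q_8$, i.e.\ $|G|\in\{12,24\}$, whereas the paper lands on $|G|\in\{24,48\}$ and checks both by computer. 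Your Sylow counting argument $n_5\equiv 1\pmod 5$ in case (2) likewise replaces the paper's normalizer-index computation from Lemma~\ref{lemma5}(2). The net effect is that your proof needs GAP only for the single order-$24$ check (and even that is a finite hand computation), at the cost of importing the cyclic-or-quaternion classification, which the paper never uses. Both arguments are sound; yours is more self-contained and generalizes better to larger orders, the paper's is shorter to state because it leans on the earlier lemmas and on machine verification.
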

\begin{proof}
   (1) and (2) follow using the similar idea given in the proof of Lemma \ref{lemma5}.

    (3) When $K_1\cong \mathbb{Z}_{8}$ and $K_2\cong \mathbb{Z}_{12}$, then result follows from the proof of Lemma \ref{lemma5}.
    
    When $K_1,K_2\cong \mathbb{Z}_8$, following the same idea from the proof of Lemma \ref{lemma6} we get that $|G| = 2^{a}$ where $a\geq 4$.  Also, $a\leq 4$ follows from the ideas in proof of Lemma \ref{Lemma2}. Thus, $a=4$. Using GAP \cite{GAP}, no group of order $16$ satisfies the hypothesis.  
    
    When $K_1,K_2\cong \mathbb{Z}_{12}$, again following the same idea from the proof of Lemma \ref{lemma7} we get that $|G| = 2^{a}3$ where $a\in \{3,4\}$. By GAP \cite{GAP}, there is no group of orders $24$ and $48$ satisfying the hypothesis.
\end{proof}
 
\begin{lemma}\label{lemma9}
    If $G$ satisfies $(P_4)$, then $G$ is a $\{2,5\}$-group. Furthermore $G \cong \mathbb Z_{10}$ or SmallGroup$[20,1]:$ $\mathbb{Z}_{5}\rtimes \mathbb{Z}_{4}$.
\end{lemma}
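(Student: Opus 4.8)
The plan is to feed the hypothesis $(P_4)$ into Lemma \ref{lemma8} to pin down $K_1,K_2$, and then to strip the prime divisors off $|G|$ one at a time. Recall that under $(P_4)$ the only cyclic subgroups of $G$ with $\phi=4$ are $K_1,K_2$ (orders in $\{5,8,10,12\}$) and the only ones with $\phi=2$ are the $H_i$ (orders in $\{3,4,6\}$). Lemma \ref{lemma8} eliminates every possibility for $\{K_1,K_2\}$ except $\{\mathbb{Z}_5,\mathbb{Z}_{10}\}$: if one of the $K_i$ is $\mathbb{Z}_{10}$ then part (1) forces the other to be $\mathbb{Z}_5$, if one is $\mathbb{Z}_5$ then part (2) forces the other to be $\mathbb{Z}_{10}$, and part (3) kills the pairs with both orders in $\{8,12\}$. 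Hence, after relabelling, $K_1\cong\mathbb{Z}_{10}$ and $K_2\cong\mathbb{Z}_5$ with $K_2<K_1$; since $\phi(5)=4$, $K_2$ is the unique $\mathbb{Z}_5$ in $G$, so $K_2\trianglelefteq G$ by Lemma \ref{normal}. In particular every cyclic subgroup of $G$ has order in $\{1,2,3,4,5,6,10\}$, and so does every element of $G$.

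Next I would determine which primes divide $|G|$. A subgroup of prime order is cyclic with order in the list above, so the only possible prime divisors are $2,3,5$, and $2,5\mid|G|$ because $\mathbb{Z}_{10}\le G$. To rule out $3$, suppose $3\mid|G|$; then some $H_i\cong\mathbb{Z}_3$, and since $K_2\cong\mathbb{Z}_5$ is normal, $K_2H_i$ is a subgroup of order $15$, hence $\cong\mathbb{Z}_{15}$, which is a cyclic subgroup of order $15$ with $\phi(15)=8$ — impossible under $(P_4)$. Thus $3\nmid|G|$ and $|G|=2^a5^c$, which already proves the $\{2,5\}$-group claim. A cyclic $\mathbb{Z}_{25}$ is excluded since $\phi(25)=20$, while $\mathbb{Z}_5\times\mathbb{Z}_5$ would give a second subgroup of order $5$; hence the Sylow $5$-subgroup has order $5$, i.e. $c=1$ and $|G|=2^a\cdot5$.

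For the $2$-part I would use condition $(A_4)$: $G$ has a unique involution, so each Sylow $2$-subgroup $Q$ has a unique involution and is therefore cyclic or generalized quaternion; and since no element of order $8$ exists (a $\mathbb{Z}_8$ would have $\phi(8)=4$ and be one of $K_1,K_2$), $Q$ has exponent at most $4$. The only such $2$-groups are $\mathbb{Z}_2,\mathbb{Z}_4,Q_8$, giving $a\le3$ and $|G|\in\{10,20,40\}$. Because $K_2\trianglelefteq G$, $K_2\cap Q=\{e\}$ and $|K_2Q|=|G|$, we get $G\cong\mathbb{Z}_5\rtimes Q$, and I would run through the three orders. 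For $|G|=10$ the only extension with one involution is $\mathbb{Z}_{10}$ (the other, $D_{10}$, has five). For $|G|=20$ the trivial extension $\mathbb{Z}_{20}$ contains a forbidden cyclic $\mathbb{Z}_{20}$ ($\phi(20)=8$) and the faithful extension $F_{20}$ has five involutions, leaving only SmallGroup$[20,1]\cong\mathbb{Z}_5\rtimes\mathbb{Z}_4$. For $|G|=40$, any action $Q_8\to\mathrm{Aut}(\mathbb{Z}_5)\cong\mathbb{Z}_4$ has image of order at most $2$ (as $Q_8^{\mathrm{ab}}\cong\mathbb{Z}_2\times\mathbb{Z}_2$), so the kernel contains a $\mathbb{Z}_4$ centralizing $\mathbb{Z}_5$, producing $\mathbb{Z}_5\times\mathbb{Z}_4\cong\mathbb{Z}_{20}$ and hence the forbidden element of order $20$; thus order $40$ is empty. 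This yields $G\cong\mathbb{Z}_{10}$ or SmallGroup$[20,1]$.

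The step needing the most care, and the likely main obstacle, is the analysis of the $2$-part: rigorously justifying that $Q$ is cyclic or generalized quaternion of exponent at most $4$, and then exhaustively ruling out order $40$. Here I would prefer the short structural argument above (or a GAP \cite{GAP} check in the paper's style) over merely listing candidate isomorphism types, since it is the one place where completeness rather than a single contradiction is required.
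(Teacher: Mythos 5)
Your proof is correct, and it reaches the same conclusion as the paper but by a partly different route in the second half. The opening is identical in spirit: both you and the paper invoke Lemma \ref{lemma8} to force $\{K_1,K_2\}=\{\mathbb{Z}_5,\mathbb{Z}_{10}\}$, rule out $3\mid|G|$ (the paper merely asserts $H_i\ncong\mathbb{Z}_3,\mathbb{Z}_6$ is ``easy to check''; your $K_2H_i\cong\mathbb{Z}_{15}$ argument is exactly the right justification), and kill $b>1$ via the order-$25$ subgroup. Where you diverge is the $2$-part: the paper bounds $a\leq 4$ by importing the order-$32$ subgroup argument of Lemma \ref{Lemma2} and then checks $|G|\in\{10,20,40,80\}$ in GAP, whereas you exploit condition $(A_4)$ directly --- a unique involution forces the Sylow $2$-subgroup to be cyclic or generalized quaternion, and the absence of elements of order $8$ then leaves only $\mathbb{Z}_2,\mathbb{Z}_4,Q_8$, giving the sharper bound $a\leq 3$ and a purely structural elimination of order $40$ via $Q_8^{\mathrm{ab}}\cong\mathbb{Z}_2\times\mathbb{Z}_2$. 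Your version buys a computer-free and self-contained proof (and a smaller list of orders to examine); the paper's version buys brevity and uniformity with the GAP-based checks used elsewhere in the article. The only point to polish is the order-$20$ case: make explicit that there are three semidirect products $\mathbb{Z}_5\rtimes\mathbb{Z}_4$ according to whether the image of $\mathbb{Z}_4$ in $\mathrm{Aut}(\mathbb{Z}_5)$ has order $1$, $2$, or $4$, and that the surviving middle case is the dicyclic group SmallGroup$[20,1]$, which one should then verify actually satisfies $(P_4)$ (it does: one $\mathbb{Z}_2$, one $\mathbb{Z}_5$, one $\mathbb{Z}_{10}$, five $\mathbb{Z}_4$'s, so $|C(G)|=9=\frac{|G|}{2}-1$).
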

\begin{proof} 
By Lemma \ref{lemma8} we have $K_1\cong \mathbb{Z}_{10}$ and $K_2 \cong \mathbb Z_{5}$.
    It is also easy to check that $H_{i}\ncong \mathbb{Z}_{3},\mathbb{Z}_6$ $\forall i \in \{1,\cdots, n-5\}$, and hence $|G|=2^{a}5^{b}$ for some $a,b$. We claim that $b=1$, as if $b>1$ then we get a subgroup of order $25$ which would be either $\mathbb{Z}_{25}$ or $\mathbb{Z}_5\times \mathbb{Z}_5$ both contradicting $(P_4)$. Hence, $|G| = 2^a \cdot 5$. Also as $|G|\geq 10$, so $a\geq 1$. Following the reasoning of Lemma \ref{Lemma2}, we have $a\leq 4$. Hence $|G|\in \{10,20,40,80\}$ and using GAP \cite{GAP} we get the result.
\end{proof}

\subsection{Groups with \texorpdfstring{$|C(G)|=\frac{|G|}{2}-1$}{|C(G)|=|G|/2} and \texorpdfstring{$\frac{|G|}{2}-4 \leq$} \texorpdfstring{$\mathcal{O}_2(G)$} $\leq \frac{|G|}{2}-2$}\label{2.6}

\subsubsection{Groups with \texorpdfstring{$|C(G)|=\frac{|G|}{2}-1$}{C(G)=|G|/2-1} and \texorpdfstring{$\mathcal{O}_2(G)=\frac{|G|}{2}-2$}{O2(G)=|G|/2-3}} There is no  such group, which can be proved using similar ideas as in  Section \ref{2.4.1}.

\subsubsection{Groups with \texorpdfstring{$|C(G)|=\frac{|G|}{2}-1$}{C(G)=|G|/2-1} and \texorpdfstring{$\mathcal{O}_2(G)=\frac{|G|}{2}-3$}{O2(G)=|G|/2-3}} Following similar ideas as in Section \ref{2.4.2}, we get $G\cong \mathbb{Z}_{8}$, which is not a solution.

\subsubsection{Groups with \texorpdfstring{$|C(G)|=\frac{|G|}{2}-1$}{C(G)=|G|/2-1} and \texorpdfstring{$\mathcal{O}_2(G)=\frac{|G|}{2}-4$}{O2(G)=|G|/2-4}} 

Let $\{x_i\}_{i=1}^{2n}$ be the elements of $G$ such that $x_1 = e$ and $x_i^2= e$ $\forall i \in \{2,3,\cdots,n-3\}$.
Suppose $x_i, 1 \leq i \leq n-1$ are the generators of the elements of $C(G)$.
Using the fact that  for each divisor $k$ of $|x_{n-1}|$ or $|x_{n-2}|$ there are  cyclic subgroups of  order $k$ sitting in them, we conclude that
  $|x_{n-1}|$ cannot be product of two or more distinct odd primes, and so
 we are left with the following possibilities of $|x_{n-2}|$ and $|x_{n-1}|$.
\begin{center}
\begin{tabular}{ |c|c|c|c|c| } 
\hline
 & (a) & (b) & (c) &(d)\\
\hline
$|x_{n-2}|$ & $2p$ & $p^{2}$  & $p$ & $4$ \\ 
$|x_{n-1}|$ & $p$ & $p$ & $q,p,4$ & $4,8$\\ 
\hline
\end{tabular}
\end{center}
where $p$ and $q$ are odd primes such that $p\neq q$.

If $\langle x_{n-1}\rangle \leq \langle x _{n-2} \rangle$, then $|x_{n-2}| \geq n+4\implies |x_{n-2}| = 2n \implies G\cong \mathbb Z_{2n}$. Hence, $n-4 = 1\implies n=5\implies G\cong \mathbb{Z}_{10}$. 
 Hence,  $\langle x_{n-1}\rangle \leq \langle x _{n-2} \rangle$ or  $\langle x_{n-2}\rangle \leq \langle x _{n-1} \rangle$ holds if and only if $G \cong \mathbb{Z}_{10}$.
 
Case (a): $\phi(2p)+\phi(p)=n+3\implies 2p-5=n\implies p=n=5\implies G\cong \mathbb{Z}_{10}$ which is a solution.

Case (b):  Here  $\langle x_{n-1}\rangle \leq \langle x _{n-2} \rangle$ holds thus $G\cong \mathbb{Z}_{10}$, but it doesn't have cyclic subgroup with order $p^{2}$ for any $p$.

Case (c):
If $|x_{n-2}|=p$ and  $|x_{n-1}|=q$ or $4$, then similar ideas as in Section \ref{2.4.3} will give a contradiction. If $|x_{n-1}|=|x_{n-2}|=p$, then $\phi(p) + \phi(p)=n+3\implies 2p-5=n\implies p=n=5$ and $|G|=10$ but both groups of order $10$ do not have two cyclic subgroups of order $5$. 

Case (d): When $|x_{n-1}|=|x_{n-2}|=4$, we have $\phi{(4)}+\phi{(4)}=n+3\implies n=1$. Thus $G\cong \mathbb{Z}_2$, which is not a solution. And finally when $|x_{n-2}|=4$ and $|x_{n-1}|=8$, then $\langle x_{n-2}\rangle \leq \langle x_{n-1}\rangle$ thus $G\cong \mathbb{Z}_{10}$, which doesn't have cyclic subgroup of order $4$ and $8$. 
\\

\noindent\textbf{Proof of Theorem \ref{mainresult_3}}
\begin{proof}
 (1) Proof follows from Section \ref{o2G=1, |G|/2-1}. 
 
 Proof of (2) and (3) follows from  Section \ref{2.6}.
\end{proof}

%\subsection{Groups with \texorpdfstring{$|C(G)|=\frac{|G|}{2}+1$}{|C(G)|=|G|/2} and \texorpdfstring{$\mathcal{O}_2(G)=n-3$}{O2(G)=n-3} or \texorpdfstring{$n-4$}{n-4}}

\noindent\textbf{Proof of Theorem \ref{mainresult_4}}
\begin{proof}
 (1) Follows from Theorem  \ref{mainresult} when $k=0$.

(2), (3), (4) and (5) can be proved using similar ideas as used in proof of Theorem \ref{mainresult_2} and Theorem \ref{mainresult_3}.
\end{proof}

\noindent \textbf{Proof of Theorem \ref{mainresult_5}}.
\begin{proof}
 In this case, the group must be of odd order. Let $\{x_i\}_{i=1}^{2n+1}$ be the elements of $G$ such that $x_1 =e$.
 
 (1)  It can be easily checked that there are no solutions for $n=0,1$. So assume that $n>1$. Then  $cl(x_k)\geq 2 $ $\forall k\in \{2,\cdots, n\}$. Therefore, repeating the same process as in the proof of Theorem \ref{mainresult_2}, we get the following conditions on $G$.

$(A_5)$ There is a unique cyclic subgroup (say $N$) of order $m$ such that $\phi(m) = 4 \implies m\in \{5,8,10,12\}$. By Lemma \ref{normal}, $N\trianglelefteq G$.

$(B_5)$ There are $n-2$ cyclic subgroups say $H_1,\cdots,H_{n-2}$ with orders $m_1,\cdots,m_{n-2}$ such that $\phi(m_i) = 2\implies m_i\in \{3,4,6\} \forall i\in \{1,\cdots,n-2\}$.

Let's call conditions $(A_5)$ and $(B_5)$ collectively as property $(P_5)$.

Since, the group is of odd order, therefore in $(A_5)$, $m=8, 10, 12$ are not possible. For the same reason, in $(B_5)$, $m_i = 4,6$ are not possible. So the only possibility is that $N\cong \mathbb Z_5$ and $H_i\cong \mathbb Z_3$ $\forall i$. Now if $n>2$ then such a $H_i$ exists and we get $NH_i\leq G$ where $NH_i\cong \mathbb Z_{15}$ contradicting $(P_5)$. So, the only possibility is $n= 2$. For $n=2$, $G\cong \mathbb Z_5$ and it satisfies $|C(G)| = \frac{|G|-1}{2}$.\\

(2) Let $|G| = 2n+1$ where $n\geq 0$. Then $G$ have $n$ cyclic subgroups say $H_1,\cdots,H_{n}$ of orders $m_1,\cdots,m_{n}$ such that $\phi(m_i) = 2\implies m_i\in \{3,4,6\} \forall i\in \{1,\cdots,n\}$.  Since $G$ is of odd order, $m_i\neq 4,6$. So every non-trivial element of $G$ must be of order $3$. Hence, $G\cong\{e\}$ or $G$ is a group with $\exp(G)=3$.\\

(3) For $r\geq 1$, repeating the same process as in the proof of (1), this time we are left with $2r$ non-trivial elements, each of them having order $m$ such that $\phi(m)=1$, hence each of them is an involution which contradicts the fact that group is odd.
\end{proof}

\noindent \textbf{Proof of Lemma \ref{bound}}.
\begin{proof}
In this case, by our key step given in \ref{keystep}, we have $s=\mathcal{O}_{2}(G)+1-2m$. Hence, the largest number of generators  a cyclic subgroup of $G$ can have $\mathcal{O}_2(G)+3-2m$ elements. For any prime divisor $p$ of  $|G|$, we have $\phi(p)=p-1 \leq \mathcal{O}_2(G)+3-2m\implies p\leq \mathcal{O}_2(G)+4-2m$. 
\end{proof}

% \section{Some more results}
Using the similar ideas used in the proof of the main results, we have the following. 
\begin{lemma}
     Let $G$ be a group of order $2n$ and $|C(G)|=n+r$ for a fixed positive integer $r$.
     \begin{enumerate}
         \item If  $1\leq  r\leq n $ and $\mathcal{O}_{2}(G)=2r-1$,  then $G$ is a $\{2\}$-group or $\{2,3\}$-group. 
         \item If $1\leq r\leq n -3 $ and $\mathcal{O}_{2}(G)=2r+1$, then $G$ is a $\{2\}$-group or $\{2,3\}$-group or $\{2,5\}$-group.
     \end{enumerate}
      \end{lemma}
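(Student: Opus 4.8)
The plan is to run the counting argument from the key step of Section \ref{keystep} for both parts, taking $k = |C(G)| = n + r$ and reading off the admissible orders of cyclic subgroups from the leftover parameter $s = 2(n-k) + \mathcal{O}_2(G) + 1$ computed there. Substituting $k = n+r$, part (1) has $\mathcal{O}_2(G) = 2r-1$, giving $s = 0$, while part (2) has $\mathcal{O}_2(G) = 2r+1$, giving $s = 2$; the two parts then diverge only in how these leftover elements are distributed among the equivalence classes $cl(x_j)$.

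For part (1), since $s = 0$ there are no leftover elements, so every class $cl(x_j)$ of size at least two consists of exactly two elements. Hence every cyclic subgroup $H$ with $|H| \ge 3$ satisfies $\phi(|H|) = 2$, i.e.\ $|H| \in \{3,4,6\}$, and in particular $\phi(|x|) \le 2$ for every $x \in G$. If some prime $p \ge 5$ divided $|G|$, then Cauchy's theorem would produce an element of order $p$ with $\phi(p) = p-1 \ge 4 > 2$, a contradiction; thus the primes dividing $|G|$ lie in $\{2,3\}$. As $|G| = 2n$ is even, $G$ is a $\{2\}$-group or a $\{2,3\}$-group. (The extreme case $r=n$ simply makes every nonidentity element an involution.)

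For part (2), since $s = 2$ and a partition of $s$ with an odd part is forbidden (because $\phi(m)$ is even for $m \ge 3$), the only admissible partition is $(2)$; hence exactly one class absorbs both leftover elements. This produces a unique cyclic subgroup $N$ with $\phi(|N|) = 4$, so $|N| \in \{5,8,10,12\}$, while every other cyclic subgroup $H$ with $|H| \ge 3$ has $\phi(|H|) = 2$, so $|H| \in \{3,4,6\}$; the hypothesis $r \le n-3$ is exactly what guarantees a class is present to receive the two elements. Thus $\phi(|x|) \le 4$ for every $x \in G$, and as above Cauchy's theorem rules out any prime $p \ge 7$, leaving $\{2,3,5\}$ as the only candidate primes.

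The hard part will be excluding the simultaneous divisibility of $|G|$ by both $3$ and $5$; everything else is bookkeeping forced by the key step. I would argue as in Lemmas \ref{lemma5} and \ref{lemma9}: if $5 \mid |G|$ then Cauchy gives a copy of $\mathbb{Z}_5$, which is a $\phi=4$ subgroup and so must coincide with the unique such subgroup $N$ (this also shows $N \not\cong \mathbb{Z}_{10}$, as $\mathbb{Z}_{10}$ would harbour a second $\phi=4$ subgroup $\mathbb{Z}_5$). Being the unique cyclic subgroup of order $5$, $N \cong \mathbb{Z}_5$ is normal by Lemma \ref{normal}. If moreover $3 \mid |G|$, choosing $H \cong \mathbb{Z}_3$ yields a subgroup $NH$ of order $15$, necessarily $\mathbb{Z}_{15}$, whose generator has order $15$ with $\phi(15) = 8 > 4$ generators, contradicting that every cyclic subgroup of $G$ has at most four generators. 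Hence $3$ and $5$ cannot both divide $|G|$, and since $|G|$ is even, $G$ is a $\{2\}$-group, a $\{2,3\}$-group, or a $\{2,5\}$-group.
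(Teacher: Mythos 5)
Your computation of the leftover parameter $s$ from the key step ($s=0$ in part (1), $s=2$ in part (2)), the resulting bound $\phi(|x|)\le 2$ resp.\ $\le 4$ combined with Cauchy's theorem, and the exclusion of $15\mid |G|$ via the normal $\mathbb{Z}_5$ and a cyclic subgroup of order $15$ is exactly the argument the paper gestures at with ``using the similar ideas used in the proof of the main results'' (cf.\ the key step and Lemmas \ref{lemma5} and \ref{lemma9}). The proposal is correct and fills in the intended proof; no genuinely different route is taken.
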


\vspace{1cm}

\noindent\textbf{Acknowledgements:}
% The authors are thankful to the editor for providing valuable suggestions which has greatly improved the presentation of the paper. We are also grateful to the anonymous referee for carefully and critically reading the manuscript and providing many valuable comments. 
The first named author acknowledges the research support of the SERB-SRG, Department of Science and Technology (Grant no. SRG/2023/000894), Govt. of India. The third named author thank the National Institute of Science Education and Research, Bhubaneswar, for providing a summer intern fellowship. The authors thank the National Institute of Science Education and Research, Bhubaneswar, for providing an excellent research facility.

\end{document}